\newtheorem{lemma}{Lemma}
\newtheorem{remark}{Remark}
\newtheorem{proposition}[lemma]{Proposition}
\newtheorem{theorem}[lemma]{Theorem}
\newtheorem{corollary}[lemma]{Corollary}
\newtheorem{question}[lemma]{Question}
\newcommand{\EE}{{\mathbb{E}}}
\newcommand{\PP}{{\mathbb{P}}}
\def\rGs{{\mathscr{G}_\bullet}}
\def\rGss{{\mathscr{G}_{\bullet\bullet}}}
\newcommand{\dZ}{\mathbb {Z}}
\newcommand{\dR}{\mathbb {R}}
\newcommand{\dC}{\mathbb {C}}
\newcommand{\cW}{\mathcal {W}}
\newcommand{\cF}{\mathscr {F}}
\newcommand{\cB}{\mathcal {B}}
\newcommand{\cT}{\mathcal {T}}
\newcommand{\cI}{\mathcal {I}}
\newcommand{\cL}{\mathcal {L}}
\newcommand{\cP}{\mathcal {P}}
\newcommand{\wpi}{\widehat{\pi}}
\newcommand{\rH}{\mathscr{H}}
\newcommand{\rI}{\mathscr{I}}
\newcommand{\dd}{{\rm d}}
\newcommand{\dloc}{{\dd}_{\textsc{loc}}}
\newcommand{\wcL}{{\widehat{\cL}}}
\title{Sparse expanders have negative curvature}
\author{Justin Salez\footnote{CEREMADE, CNRS, UMR 7534, Université Paris-Dauphine, PSL University, 75016 Paris, France}}
\begin{document}
\maketitle

\begin{abstract}
We prove that bounded-degree expanders with non-negative Ollivier-Ricci curvature do not exist, thereby solving a long-standing open problem suggested by  Naor and Milman and publicized by Ollivier (2010). In fact,  this remains true even if we allow for a vanishing proportion of large degrees, large eigenvalues, and negatively-curved edges. To establish this, we work directly at the level of  Benjamini-Schramm limits, and exploit the entropic characterization of the Liouville property   on stationary random  graphs to show that non-negative curvature and spectral expansion are incompatible ``at infinity''.  We then transfer this  result to finite graphs via   local weak convergence. The same approach also applies to the Bacry-Emery curvature condition  CD$(0,\infty)$, thereby settling a recent conjecture of Cushing, Liu and Peyerimhoff (2019). 
\end{abstract}
\tableofcontents
\section{Introduction}
\subsection{Non-negative curvature}
The \emph{Ricci curvature} of a manifold is a fundamental concept in Riemannian geometry, see e.g.  \cite{MR3726907}.
In two celebrated works \cite{MR2371483,MR2484937}, Ollivier proposed a notion of curvature based on optimal transport which applies to arbitrary metric spaces, hence in particular to the discrete setting of graphs. Specifically, let $G=(V_G,E_G)$ be a locally finite connected graph. As usual, write $\deg_G(x)$ for the degree of a vertex $x$, and $\dd_G(x,y)$ for the length of a minimal path from $x$ to $y$ in $G$. Let also $P_G\colon V_G\times V_G\to[0,1]$ denote the transition matrix of the lazy simple random walk on $G$, i.e.
\begin{eqnarray*}
P_G(x,y) & := & 
\left\{\begin{array}{ll}
\frac{1}{2\deg_G(x)} & \textrm{if }\{x,y\}\in E_G;\\
\frac{1}{2} & \textrm{if }x=y;\\
0 & \textrm{else}.
\end{array}
\right.
\end{eqnarray*} 
The \emph{Ollivier-Ricci curvature} at an edge $\{x,y\}\in E_G$ is defined as
\begin{eqnarray*}
\kappa_G(x,y) & := & 1-\cW_1\left(P_G(x,\cdot),P_G(y,\cdot)\right),
\end{eqnarray*}
where $\cW_1$ denotes the $L^1-$Wasserstein distance on  $\cP_1(V_G,\dd_G)$, see (\ref{def:cW1}) below.  Note that the computation of $\kappa_G(x,y)$  amounts to solving a finite-dimensional linear optimization problem, and is therefore amenable to standard algorithmic techniques (see \cite{doi:10.1080/10586458.2019.1660740} for a beautiful    interactive curvature calculator).  The {Ollivier-Ricci curvature} of the whole graph is then defined as  
\begin{eqnarray*}
\kappa(G) & := & \inf_{\{x,y\}\in E_G} \kappa_G(x,y).
\end{eqnarray*}

This fundamental geometric quantity measures how distances are contracted, on average, under the action of  $P_G$. When $\kappa(G)\ge 0$, the graph $G$ is called \emph{non-negatively curved}. This is the case, for example, when $G$ is the Cayley graph of an abelian group, as witnessed by the obvious coupling that uses the same random generators for both trajectories. Non-negative curvature is equivalent to the requirement that $P_G$ is a contraction under the Wasserstein metric $\cW_1$, and constitutes the essence of the powerful \emph{path coupling method} for bounding mixing times \cite{MR2316551}. Consequences in terms of geometry, mixing,  and concentration of measure  have been massively investigated, and quantified by a variety of functional inequalities. The literature is too vast for an exhaustive account, and we refer the reader to the seminal papers \cite{MR2371483,MR2484937,MR2872958,MR2683634}, the survey \cite{MR2648269}, and the more recent works \cite{MR3726607,MR3998765,MR4096132,MR4149342,2019arXiv190713514M} for details, variations, references, and open problems. In particular, the present work was motivated by the following long-standing question, due to Naor and Milman, and publicized by Ollivier \cite[Problem T]{MR2648269}. 
Recall that a \emph{family of expanders} is a sequence  of finite graphs with uniformly bounded degrees, diverging sizes, and spectral gap bounded away from $0$.

\begin{question}[Problem T in \cite{MR2648269}]\label{quest}Is there a family of non-negatively curved expanders ? 
\end{question}

An instructive special class of graphs for which non-negative curvature is completely understood is that of cubic graphs. Specifically, it was shown in \cite{doi:10.1080/10586458.2019.1660740} that prism graphs and Möbius ladders are the only cubic graphs with non-negative Ollivier-Ricci curvature. Since these  are not expanders, the answer to Question \ref{quest} is negative for cubic graphs. To the best of our knowledge, this is the only result in the direction of Question \ref{quest}, despite the rich body of works on non-negative  curvature.

 \subsection{Main result}
In the present paper, we answer Question \ref{quest} negatively in full generality, as well as its {CD$(0,\infty)$} analogue raised by Cushing, Liu and Peyerimhoff \cite[Conjecture 9.11]{MR4045968}, see Remark \ref{rk:CD} below. Moreover, we show that the answer to  Question \ref{quest} remains negative even if we significantly relax the required properties.
Specifically, denote by $\Delta(G)$ the maximum degree of a finite graph $G$, and by
$$1\ =\ \lambda_1(G)\ \ge\ \lambda_2(G)\ \ge \ldots\ \ge \ \lambda_{N}(G)\ \ge\  0,$$
the $N=|V_G|$ ordered eigenvalues of its transition matrix $P_G$. With these notations,  Question \ref{quest} simply asks whether there exist constants $\Delta\ge 1$, $\rho<1$ and arbitrary large graphs   satisfying  
\begin{enumerate}[(A)] 
\item sparsity: $\Delta(G)  \le  \Delta;$
\item spectral expansion: $\lambda_2(G)  \le  \rho;$
\item non-negative curvature: $\kappa(G) \ge  0.$
\end{enumerate}
Our main result says that no large graph can  even come close to satisfying these three requirements. 

\begin{theorem}[Main result]\label{th:main}Fix  $\Delta\ge 1$ and $\rho\in(0,1)$. 
Then, there exists a constant $\varepsilon=\varepsilon_{\Delta,\rho}>0$ such that \emph{every} finite graph $G$ must satisfy one of the following  conditions:
\begin{itemize}
\item either $G$ is far from satisfying the sparsity requirement (A), in the following sense:
\begin{eqnarray*}
\sum_{x\in V_G}\deg_G(x)\log \deg_G(x)  & > & (\Delta\log \Delta) |V_G| ;
\end{eqnarray*}
\item or $G$ is far from satisfying the expansion requirement (B), in the following sense:
\begin{eqnarray*}
\mathrm{card}\{i\colon \lambda_i(G)>\rho\} & \ge & \varepsilon|V_G| ;
\end{eqnarray*}
\item or $G$ is far from satisfying the curvature requirement (C), in the following sense:
\begin{eqnarray*}
\mathrm{card}\{e\in E_G\colon \kappa_G(e)<-\varepsilon\} & \ge  & \varepsilon|E_G|.
\end{eqnarray*}
\end{itemize}
\end{theorem}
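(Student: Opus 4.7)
My strategy is a Benjamini-Schramm compactness argument: translate the three escape conditions into properties of a local weak limit, and derive a contradiction at the infinite-graph level. Arguing by contradiction, I assume the theorem fails for some $(\Delta,\rho)$, which produces sequences $\varepsilon_n\downarrow 0$ and finite graphs $G_n$ with $|V_{G_n}|\to\infty$ violating all three escape conditions simultaneously. The uniform bound $\sum_x\deg(x)\log\deg(x)\leq (\Delta\log\Delta)|V_{G_n}|$ yields uniform integrability of the degree of a uniform random root, hence tightness in the space of rooted graphs for the local weak topology; I extract a subsequential Benjamini-Schramm limit $(G,o)$, which is a unimodular random rooted graph.

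The next task is to transfer both the curvature and the spectral conditions to $(G,o)$. The Ollivier-Ricci curvature $\kappa_G(\{x,y\})$ is a bounded local observable -- the supports of $P_G(x,\cdot)$ and $P_G(y,\cdot)$ have mutual distance at most $3$, so it is determined by a ball of bounded radius around the edge and is continuous under local convergence. Combined with the vanishing proportion of $(-\varepsilon_n)$-negatively-curved edges and the unimodular mass-transport principle, this forces every edge of $(G,o)$ to have non-negative curvature, almost surely. For the spectral condition, I use the standard fact that the empirical spectral measures of $P_{G_n}$ converge weakly to the expected spectral measure of $P_G$ at $o$: the vanishing proportion of eigenvalues above $\rho$ forces the limit measure to be supported in $[0,\rho]\cup\{1\}$, and since finite connected components each contribute a full eigenvalue at $1$ their density also vanishes, so the cluster of $o$ is almost surely infinite, the atom at $1$ is absent, and the $\ell^2$-spectral radius of $P_G$ on the cluster of $o$ is at most $\rho<1$ almost surely.

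The final step plays these two properties against each other through the entropic characterization of the Liouville property on stationary random graphs (Kaimanovich-Vershik), which identifies Liouville with vanishing asymptotic entropy rate $h(G,o):=\lim_n\tfrac{1}{n}H(X_n\mid X_0=o)$. The spectral radius bound gives exponential decay of return probabilities $p_n(o,o)\leq \rho^n$, whence via the elementary inequality $H(X_n\mid X_0=o)\geq -\log\max_y p_n(o,y)$ together with a reversibility bound on the right-hand side, I derive $h(G,o)\geq\log(1/\rho)>0$, i.e., non-Liouville. In the opposite direction, $\kappa_G\geq 0$ at every edge iterates by the $\cW_1$-triangle inequality to $\cW_1(P_G^n(x,\cdot),P_G^n(y,\cdot))\leq\dd_G(x,y)$ for all $n$ and all $x,y$; combined with stationarity via mass-transport, this Wasserstein contraction forces $h(G,o)=0$, the Liouville property. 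The two conclusions clash, closing the proof. The technical heart -- and the main obstacle I expect -- is this last implication: converting pointwise Wasserstein contractivity into a sharp vanishing-entropy estimate under stationarity alone, where the homogeneity that trivialises the analogous statement on groups (whose increments are stationary) must be replaced by a careful use of unimodularity inside the Kaimanovich-Vershik framework.
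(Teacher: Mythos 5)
Your high-level strategy is the same as the paper's: extract a Benjamini--Schramm limit via tightness, observe that curvature is a local observable and passes to the limit, and play the entropic characterization of the Liouville property against the spectral radius. The contradiction framing and the direction of the spectral step (good expansion $\Rightarrow$ small spectral radius $\Rightarrow$ positive entropy, rather than the paper's zero entropy $\Rightarrow$ unit spectral radius $\Rightarrow$ poor expansion) are equivalent repackagings of the same ideas. (Incidentally, your digression about finite components and a putative atom at $1$ is unnecessary: $(\rho,1]$ is open in $[0,1]$, so the Portmanteau theorem applied to $\mu_{G_n}\to\mu_{\cL}:=\cL[\mu_{(G,o)}]$ gives $\mu_{\cL}((\rho,1])=0$ directly, hence $\rho(G)\le\rho$ $\cL$-a.s.)

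There is, however, a genuine gap at the step you yourself flag as the ``technical heart.'' You assert that iterating non-negative curvature to the Wasserstein bound $\cW_1(P_G^n(x,\cdot),P_G^n(y,\cdot))\le\dd_G(x,y)$, ``combined with stationarity via mass-transport,'' forces $h(G,o)=0$. This does not follow. The iterated bound only says that the expected distance between optimally coupled walkers does not increase; it gives no decay whatsoever, and in particular no control on $\PP(X_n\ne Y_n)$, which is what one needs to show that a bounded harmonic function has small oscillation. Indeed, for a bounded harmonic $f$ the bound $|f(x)-f(y)|\le\|f\|_{\mathrm{Lip}}\,\cW_1(P^n(x,\cdot),P^n(y,\cdot))$ is vacuous, and the relevant estimate $|f(x)-f(y)|\le 2\|f\|_\infty\,\PP(X_n\ne Y_n)$ requires the walkers to actually meet. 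No amount of ``stationarity via mass-transport'' alone turns a non-expanding coupling into a coalescing one. What is actually needed, and what the paper supplies, is a mechanism for strict progress: exploiting laziness, one shows that among all optimal couplings of $P_G(x,\cdot)$ and $P_G(y,\cdot)$ one can always choose one that assigns probability at least $\tfrac12\max(1/\deg_G(x),1/\deg_G(y))$ to pairs at strictly smaller distance (the paper's Lemma on ``good optimal couplings''). This makes $Z_t=\dd_G(X_t,Y_t)$ a non-negative super-martingale with macroscopic conditional second moments along the trajectory, and a quantitative maximal/hitting-time estimate for such super-martingales (the paper's non-negative super-martingale lemma, applied with $W_s\ge\tfrac{1}{2\deg_G(X_s)}$) forces $\tau<\infty$ provided $\sum_s 1/\deg_G(X_s)=\infty$ almost surely. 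That last divergence, in turn, is exactly what stationarity buys you, via Birkhoff's ergodic theorem applied to the stationary sequence $1/\deg_G(X_t)$. Without an argument of this kind -- laziness producing a definite chance of decreasing distance, a super-martingale hitting-time bound, and an ergodic control of the degrees along the walk -- the implication ``$\kappa(G)\ge 0\Rightarrow\rH(G)=0$'' is simply not established, and that is precisely the content of the paper's Theorem~\ref{th:curvature}.

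A minor further caution: in the spectral-radius-to-entropy step, the chain $H(X_n\mid X_0=o)\ge -\log\max_y p_n(o,y)$ plus reversibility introduces the degree ratio $\deg_G(y)/\deg_G(o)$ when you try to bound $\max_y p_n(o,y)$ by $\sqrt{p_{2n}(o,o)}$, so unbounded degrees must be handled (the paper does this via the concavity inequality $\log P_G^{2t}(o,o)\ge\sum_x P_G^t(o,x)\log P_G^t(x,o)$ together with $L^1$-control of $\sum_x P_G^t(o,x)\log\left(\deg_G(o)/\deg_G(x)\right)$ under $\cL[\log\deg_G(o)]<\infty$). That is a bookkeeping issue, not a conceptual gap, but it deserves a sentence.
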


Note that the conclusion is only meaningful for large graphs, since the second condition  is trivially satisfied when $|V_G|\le \frac{1}{\varepsilon}$. Here is an equivalent -- but perhaps more intuitive -- formulation.

\begin{theorem}[Rephrasing]\label{th:reform} Let $G_n=(V_n,E_n), n\ge 1$ be finite graphs with the sparsity property
\begin{eqnarray}
\label{assume:sparse}
\sup_{n\ge 1}\left\{\frac{1}{|V_n|}\sum_{x\in V_n}\deg_{G_n}(x)\log\deg_{G_n}(x)\right\} & < & \infty.
\end{eqnarray}
Suppose in addition that the Ollivier-Ricci curvature is almost non-negative on most edges, i.e.
\begin{eqnarray}
\forall\varepsilon>0,\quad \frac{1}{|E_n|}\,\mathrm{card}\{e\in E_n\colon \kappa_{G_n}(e)<- \varepsilon\} & \xrightarrow[n\to\infty]{}  & 0.
\end{eqnarray}
Then, a macroscopic proportion of eigenvalues of the transition matrix must accumulate near $1$:
\begin{eqnarray}
\label{poorexpansion}
\forall\rho<1,\quad \liminf_{n\to\infty}\left\{\frac{1}{|V_n|}\,\mathrm{card}\{i\colon \lambda_i(G_n)\ge \rho\}\right\} &  > &  0.
\end{eqnarray}
\end{theorem}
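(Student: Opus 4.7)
The plan is to argue by contradiction, lifting the finite-graph statement to the realm of \emph{unimodular random graphs} via Benjamini--Schramm convergence, and then deriving the incompatibility of the two limiting features---non-negative curvature and spectral expansion---from the entropic characterization of the Liouville property. Suppose that along some subsequence, the conclusion (\ref{poorexpansion}) fails, so that for some $\rho<1$ one has $|V_n|^{-1}\,\mathrm{card}\{i\colon\lambda_i(G_n)\ge\rho\}\to 0$, while (\ref{assume:sparse}) and the near non-negative curvature hypothesis remain in force. The $L\log L$ control on degrees in (\ref{assume:sparse}) provides uniform integrability of $\deg_{G_n}(\cdot)$ under the size-biased measure, which is precisely what is needed to guarantee tightness of the laws of the randomly rooted graphs $(G_n,o_n)$, with $o_n$ uniform in $V_n$, in the local weak topology on $\rGs$. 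Extracting a subsequential limit gives a unimodular random rooted graph $(G,o)$ whose root degree inherits a finite $L\log L$ moment.

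The next step is to transfer the three hypotheses to the limit. Since $\kappa_G(e)$ is a local, continuous functional of the neighborhood of $e$ on events with bounded local degrees, the portmanteau principle combined with the near non-negativity hypothesis forces $\kappa_G(e)\ge 0$ almost surely for the edge from $o$ to a uniformly chosen neighbor; unimodularity then propagates this to every edge of $G$ almost surely. On the spectral side, the empirical spectral measures of $P_{G_n}$ converge weakly to the expected spectral measure $\mu$ of $P_G$ at $o$, defined through the moments $\int x^k\D\mu(x)=\dE[P_G^k(o,o)]$; along our subsequence this yields $\mu((\rho,1])=0$, so that the spectrum of $P_G$ does not cluster at $1$, and in particular the spectral radius of the walk on $(G,o)$ is strictly less than one.

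The heart of the argument is to show that these two features of $(G,o)$ are incompatible. On one hand, non-negative Ollivier-Ricci curvature provides, at every edge $\{x,y\}$, a $\cW_1$-contractive coupling of $P_G(x,\cdot)$ and $P_G(y,\cdot)$; threading these one-step couplings along a trajectory of the walk and exploiting the stationarity inherited from unimodularity, one should bound the entropy $H(P_G^n(o,\cdot))$ sublinearly in $n$, so that the Kaimanovich--Vershik asymptotic entropy of the walk vanishes. By the entropic characterization of the Liouville property on stationary random graphs, every bounded $P_G$-harmonic function is then almost surely constant. On the other hand, spectral radius strictly less than one is a form of non-amenability of the walk on $(G,o)$, and through a Kesten-type argument it should imply positive speed and hence positive asymptotic entropy, contradicting the Liouville property just established.

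The principal obstacle is executing the first half of this dichotomy in the unimodular setting, where the root is stationary rather than fixed: turning the edgewise, one-step $\cW_1$-contractions provided by non-negative curvature into a genuinely sublinear bound on the entropy of the $n$-step distribution requires a careful gluing of couplings along the trajectory while controlling the contribution of the random degrees encountered. The $L\log L$ hypothesis in (\ref{assume:sparse}) is expected to play a decisive role here, as it governs simultaneously the entropy of the starting neighborhood and the entropy produced by a single step of the random walk. Once this quantitative entropy estimate is in hand, the contradiction with the non-Liouville behavior inherited from spectral expansion closes the argument, yielding Theorem~\ref{th:reform}.
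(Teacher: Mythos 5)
Your proposal follows the same general strategy as the paper: the $L\log L$ control in (\ref{assume:sparse}) gives tightness for local weak convergence, subsequential limits are unimodular random rooted graphs with a finite $\deg\log\deg$ root moment, the local curvature observable passes to the limit, and the conclusion is obtained from the interplay between asymptotic entropy, the Liouville property, and the spectral radius of the limiting random graph. Your contrapositive framing (assume the eigenvalue accumulation (\ref{poorexpansion}) fails and deduce $\varrho(G)<1$ at the limit) is equivalent to the paper's forward argument (prove $\varrho(G)=1$ at the limit, then push it back to the finite graphs by continuity of $(G,o)\mapsto\mu_{(G,o)}$); both hinge on the same Portmanteau/moment-convergence step and on the observation that $\varrho(G)$ is the top of the support of $\mu_{(G,o)}$. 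Your ``Kesten-type'' detour through positive speed to get positive entropy from $\varrho(G)<1$ is also unnecessary: the paper's Lemma~\ref{lm:radius} gives $\rH(G)\ge 2\log(1/\varrho(G))$ directly by a one-line concavity computation under stationarity.

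The genuine gap is in the step you yourself call ``the heart of the argument'': getting from non-negative curvature to zero entropy. You propose to thread one-step $\cW_1$-contractive couplings along a trajectory and bound $H(P_G^n(o,\cdot))$ sublinearly, but you do not say how, and it is not clear this can be executed --- in particular, mere $\cW_1$-contraction of $P_G$ does not by itself control the entropy of $P_G^n(o,\cdot)$. The paper does something different: it builds \emph{good} optimal couplings (Lemma~\ref{lm:coupling}, showing a doubly-optimal coupling places mass at least $\tfrac{1}{2\deg_G(x)}$ on pairs that strictly decrease the distance), applies a super-martingale hitting-time estimate (Lemma~\ref{lm:martingale}) to $Z_t=\dd_G(X_t,Y_t)$ to show the two walkers meet almost surely whenever $\sum_t 1/\deg_G(X_t)=\infty$, and then uses Birkhoff's ergodic theorem under the stationary law to verify that divergence condition --- this is exactly how the $L\log L$ hypothesis substitutes for bounded degrees. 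The meeting argument yields the Liouville property directly, and the paper then invokes the Carrasco Piaggio--Lessa direction of Theorem~\ref{th:entropy} to deduce $\rH(G)=0$. Without the meeting construction, the super-martingale lemma, and the Birkhoff device for unbounded degrees, your key step remains an outline rather than a proof.
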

Here again, the theorem is only meaningful in the large-size limit $|V_n|\to \infty$, since  the conclusion  (\ref{poorexpansion}) trivially holds otherwise. The high-level message is that on large sparse graphs, non-negative curvature (in an even weak sense) induces extremely poor spectral expansion.  This  stands in stark contrast with the  traditional idea -- quantified by a broad variety of functional inequalities over the past decade -- that non-negative curvature is associated with  \emph{good} mixing behavior.

\begin{remark}[Bacry-Emery curvature]\label{rk:CD} Bacry and Emery \cite{MR889476,MR941980,MR3155209} developed a different notion of non-negative curvature based on $\Gamma-$calculus and known as the \emph{CD$(0,\infty)$} condition, see also  \cite{MR3492631,MR3706773}. Since this notion is local, our proof also applies, with the role of Theorem \ref{th:curvature} being played by a recent result of Hua \cite[Theorem 2]{MR3910593}. Consequently, there is no family of expanders satisfying \emph{CD$(0,\infty)$}, as conjectured by Cushing, Liu and Peyerimhoff \cite[Conjecture 9.11]{MR4045968}. We note that the weaker statement obtained by replacing \emph{CD$(0,\infty)$} with \emph{CD$(0,n)$} was recently established by Münch \cite{2019arXiv190910242M}. We warmly thank David Cushing, Shiping Liu and Florentin Münch for pointing this out. 
\end{remark}

 \begin{remark}[Laziness]The literature actually contains a whole family of variants $(\kappa_\alpha)_{\alpha\in[0,1)}$  of the Ollivier-Ricci curvature $\kappa$, obtained by replacing the matrix $P_G$ with its $\alpha-$idle version:
\begin{eqnarray*}
P_{G}^{(\alpha)} & := & (2-2\alpha) P_G+(2\alpha-1)\,{\rm Id}.
\end{eqnarray*} 
There is even a continuous-time version $\kappa_{\star} :=  \lim_{\alpha\to 1} \frac{\kappa_\alpha}{1-\alpha}$, proposed in \cite{MR2872958} and largely adopted since then. In fact, it was later shown (see  \cite[Remark 5.4]{MR3815539})  that   
$
\frac{\kappa_\alpha}{1-\alpha}  \le  \kappa_\star \ = \ 2\kappa,
$
where $\kappa=\kappa_{1/2}$ is the version considered in the present paper.  
Consequently, our result is stated in the strongest possible form, and applies to all  versions of the Ollivier-Ricci curvature.
\end{remark}

\begin{remark}[Eigenvectors]\label{rk:eigenvector}Our proof will actually reveal more than (\ref{poorexpansion}): not only are there many eigenvalues near $1$, but the corresponding eigenvectors furthermore charge most vertices significantly. In other words, the poor spectral expansion of non-negatively curved graphs is not restricted to any specific region: it applies everywhere. See Remark \ref{rk:final} for a precise statement.
\end{remark}

\subsection{Proof outline}
\paragraph{Proof outline.} The most natural route towards Question \ref{quest}  would consist in looking for a quantitative upper-bound on the spectral gap of a finite non-negatively curved graph, in terms of its size and maximum degree. Interestingly, we do \emph{not} pursue this approach here. Neither do we try to obtain asymptotic estimates along a sequence of sparse graphs $(G_n)_{n\ge 1}$ with non-negative curvature. Instead, we work directly at the elegant level of \emph{local weak limits} of finite graphs, and exploit their built-in \emph{stationarity}  to prove that non-negative curvature and spectral expansion are incompatible ``at infinity''. This relies on the central concept of \emph{asymptotic entropy},  and  its classical relations  with the Liouville property and the spectral radius. We then transfer this incompatibility result to finite graphs via a relative-compactness argument. As far as we know, the idea of using local weak limits as a tool to deduce generic  bounds on the mixing parameters of sparse Markov chains have not received much attention. We firmly believe that this viewpoint will have many applications. 

\paragraph{Further questions.} The surprising ``$\deg\log \deg$'' requirement (\ref{assume:sparse}) is used  to define the asymptotic entropy on which our whole argument relies. We do not know whether it is necessary for the conclusion (\ref{poorexpansion}) to hold, or whether it can be further relaxed. Note that some degree restriction is necessary, since the complete graph satisfies $\lambda_2(G)=\kappa(G)=1/2$, regardless of its size. Also, a drawback of our approach -- as of any limit argument -- is its non-quantitative nature. It would be interesting to find an explicit  upper-bound (vanishing as $n\to\infty$) on the spectral gap of a non-negatively curved graph with $n$ vertices and maximum degree $\Delta$, i.e. to estimate
\begin{eqnarray*}
\gamma_\Delta(n) & := & \max\{1-\lambda_2(G)\colon |V_G|=n,\Delta(G)\le\Delta,\kappa(G)\ge 0\}.
\end{eqnarray*}

\paragraph{Organization of the paper.} The remainder of the paper is organized as follows:   Section \ref{sec:lwc} offers a brief, self-contained introduction to the framework of random rooted graphs. In particular, we recall the definition of  local weak convergence (Section \ref{sec:def}),  introduce  the  key notions of \emph{unimodularity}, \emph{stationarity} and \emph{tightness} (Section \ref{sec:prop}), and gather important results on the \emph{asymptotic entropy} of random walks on stationary graphs (Section \ref{sec:entropy}). Section \ref{sec:proof} is devoted to the proof of the main result, which is  reduced (in Section \ref{sec:stage}) to the following two main steps: 
\begin{enumerate}
\item Proving that  non-negative curvature implies zero-entropy  (Section \ref{sec:curvature}).
\item Proving that zero-entropy causes poor  spectral expansion (Section \ref{sec:spectrum}).
\end{enumerate}

\paragraph{Acknowledgment.} The author warmly thanks Itai Benjamini, David Cushing, Nicolas Curien, Shiping Liu, Russell Lyons, Florentin Münch and Pierre Pansu for many wonderful comments, connections and references. This work was partially supported by Institut Universitaire de France.

\section{Random rooted graphs}
\label{sec:lwc}
In this section, we provide a self-contained introduction to  the framework of \emph{local weak convergence}. This limit theory for sparse graphs was introduced by Benjamini and Schramm \cite{MR1873300} and developed further by Aldous and Steele \cite{MR2023650} and Aldous and Lyons \cite{MR2354165}. The limit points   are \emph{random rooted graphs} enjoying a powerful form of \emph{stationarity}. They describe the ``internal'' geometry of large graphs, as seen from a uniformly chosen vertex. Local weak limits  are often much more convenient to work with than the finite-graph sequences that they approximate, and have been shown to capture the asymptotic behavior of a number of natural graph parameters, see, e.g. \cite{MR2160416,MR3101844,MR2789584,MR3449319}. The present paper can be viewed as another illustration of the strength of this modern viewpoint. 

\subsection{Local weak convergence}\label{sec:def}
\paragraph{The space of rooted graphs.} 
All graphs considered in this paper will be simple, undirected, countable, and locally finite. A \emph{rooted graph} is a pair $(G,o)$, where $G$ is a  graph and $o$ is a distinguished vertex, called the \emph{root}.  Two rooted graphs $(G,o)$ and $(G',o')$ are \emph{isomorphic}, written $G\simeq G'$, if there is a bijection $\phi\colon V_G\to V_{G'}$ which preserves the root ($\phi(o)=o'$) and the edges:
\begin{eqnarray*}
\forall x,y\in V_G,\quad \{x,y\}\in E_G & \Longleftrightarrow & \left\{\phi(x),\phi(y)\right\}\in E_{G'}.
\end{eqnarray*}
We let $\rGs$ denote the set of connected rooted graphs, considered up to the isomorphism relation $\simeq$. To lighten the exposition, we will use the same notation $(G,o)$ for the rooted graph and its equivalence class. We write $\cB_t(G,o)$ for the \emph{ball of radius $t$ around the root} in $G$, i.e. the (finite) rooted subgraph of $G$ induced by the set $\{x\in V_G\colon \dd_G(o,x)\le t\}$. We equip  $\rGs$ with the \emph{local metric} $\dloc\colon\rGs\times \rGs\to [0,1]$, defined by 
\begin{eqnarray*}
\dloc((G,o),(G',o')) & := & \frac{1}{1+r},\quad \textrm{ with } \quad r\ =\ \sup\{t\ge 0\colon \cB_t(G,o)\simeq\cB_{t}(G',o')\}.
\end{eqnarray*}
In words, two elements of $\rGs$ are ``close'' to each other if one has to look  ``far away'' from the root to distinguish them apart.   It can be shown that $(\rGs,\dloc)$ is a complete  separable metric space. We equip it with its Borel $\sigma-$algebra, and call $\rGs-$valued random variables \emph{random rooted graphs}.

\paragraph{Local weak convergence.} Write $\cP(\rGs)$ for the space of Borel probability measures on $\rGs$, equipped with the usual topology of weak convergence. If $G$ is an arbitrary finite graph, define its \emph{local profile} $\cL_G\in\cP(\rGs)$ to be the empirical distribution of all possible \emph{rootings} of $G$, i.e.
\begin{eqnarray}
\label{def:profile}
\cL_G & := & \frac{1}{|V_G|}\sum_{x\in V_G}\delta_{(G,x)},
 \end{eqnarray} 
where $(G,x)$ is here implicitly restricted to the connected component of $x$ if $G$ is not connected. Finally, if $G_n=(V_n,E_n), {n\ge 1}$ are finite graphs whose local profiles $(\cL_{G_n})_{n\ge 1}$ admit a limit $\cL$ in $\cP(\rGs)$, we call $\cL$ the \emph{local weak limit} of the sequence $(G_n)_{n\ge 1}$, and write simply 
\begin{eqnarray*}
G_n & \xrightarrow[n\to\infty]{} & \cL.
\end{eqnarray*}
In words, $\cL$ is the law of a random rooted graph   which describes how the deterministic graph $G_n$ asymptotically looks when seen from a uniformly chosen root. 
More formally,  
\begin{eqnarray}
\label{def:lwc}
\frac{1}{|V_n|}\sum_{x\in V_n} f(G_n,x) & \xrightarrow[n\to\infty]{} & \cL\left[f(G,o)\right] \ \triangleq \ \int_\rGs f\,\dd\cL,
\end{eqnarray}
for each continuous, bounded observable $f\colon\rGs\to\dR$.  The left-hand side  can be thought of as a spatial average of ``local contributions'' from the various vertices of $G_n$. In short, local weak convergence  allows one to conveniently replace the asymptotic analysis of such  averages with the direct computation of an expectation at the root of a certain random graph.

\paragraph{Local observables.} The class of continuous functions on $\rGs$ clearly contains (but is not restricted to) all $t-$\emph{local} observables $(t\ge 0)$, where  $f\colon\rGs\to\dR$ is called $t-$\emph{local} if the value $f(G,o)$ is determined by the (isomorphic class of the) finite ball $\cB_t(G,o)$. Here is a short list of  examples, which will be used throughout the paper without notice:
\begin{itemize}
\item The  root degree  $(G,o)\mapsto \deg_G(o)$ is $1-$local. 
\item The minimum curvature at $o$, $(G,o)\mapsto \min_{x\sim o}\kappa_G(o,x)$ is $2-$local.
\item For each $t\ge 0$, the return probability $(G,o)\mapsto P_G^t(o,o)$ is $t-$local (in fact, $(\lfloor t/2\rfloor +1)-$local).
\item For each $t\ge 0$, the  $t-$step entropy $(G,o)\mapsto -\sum_{x\in V_G}P_G^t(o,x)\log P_G^t(o,x)$ is $t-$local.
\end{itemize}

\subsection{Tightness, unimodularity and stationarity}
\label{sec:prop}
\paragraph{Tightness.} One of the many reasons for the success of the local weak convergence framework (compared to other limit theories for sparse graphs)  is the fact that every ``reasonable'' sequence of sparse graphs admits a local weak limit. The following tightness criterion, due to Benjamini, Lyons and Schramm, gives an honest mathematical content to this vague claim. Note, of course, that passing to sub-sequences is unavoidable.
\begin{theorem}[Tightness, see Theorem 3.1 in \cite{MR3316916}]\label{th:tightness}Let $G_n=(V_n,E_n),{n\ge 1}$ be  finite graphs so that
\begin{eqnarray*}
\sup_{n\ge 1}\left\{\frac{1}{|V_n|}\sum_{x\in V_n}\phi\left(\deg_{G_n}(x)\right)\right\} & < & \infty,
\end{eqnarray*}
for some function $\phi\colon\dZ_+\to\dR_+$ satisfying $\phi(d)  \gg  d$  as $d\to\infty$. Then,  $(G_n)_{n\ge 1}$ has a subsequence which admits a local weak limit.
\end{theorem}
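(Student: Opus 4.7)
The plan is to apply Prokhorov's theorem: showing that a subsequence of $(G_n)_{n\ge 1}$ admits a local weak limit is the same as showing that $(\cL_{G_n})_{n\ge 1}$ is tight in $\cP(\rGs)$. The first task is thus to describe the relatively compact subsets of the Polish space $(\rGs,\dloc)$. Since $\dloc$ only sees balls of bounded radius, and since for any fixed size bound there are only finitely many isomorphism classes of rooted graphs of that size, a subset $K\subset\rGs$ is relatively compact iff $\sup_{(G,o)\in K}|\cB_t(G,o)|<\infty$ for every $t\ge 0$. Consequently, tightness reduces to exhibiting, for every $t\ge 0$ and every $\varepsilon>0$, an integer $M=M(t,\varepsilon)$ such that
\[
\sup_{n\ge 1}\,\frac{1}{|V_n|}\,\#\bigl\{o\in V_n\colon|\cB_t(G_n,o)|>M\bigr\}\ \le\ \varepsilon.
\]

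I would then make a deterministic reduction to the maximum degree inside the $(t-1)$-ball: if every vertex of $\cB_{t-1}(G_n,o)$ has $G_n$-degree at most $M_0$, then $|\cB_t(G_n,o)|\le (M_0+1)^t$. Setting $B_0:=\{v\in V_n:\deg_{G_n}(v)>M_0\}$, the vertices violating this property are exactly those lying in the $(t{-}1)$-step graph-neighborhood $N^{t-1}[B_0]$ of $B_0$. Hypothesis~(\ref{assume:sparse}) combined with Markov's inequality gives
\[
\frac{|B_0|}{|V_n|}\ \le\ \frac{D}{\phi(M_0)}\ \xrightarrow[M_0\to\infty]{}\ 0,
\]
where $D$ is the supremum appearing in~(\ref{assume:sparse}). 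So the initial layer is negligible for $M_0$ large, and the remaining task is to control $|N^{t-1}[B_0]|/|V_n|$ uniformly in $n$.

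This propagation through $t-1$ successive neighborhoods is the main technical obstacle, and it is precisely where the stronger super-linearity $\phi(d)\gg d$ (rather than mere tightness of the root degree) is required. Introducing $\psi(M):=\inf_{d\ge M}\phi(d)/d$, which tends to $+\infty$, the hypothesis sharpens to the moment bound $\sum_{v:\deg_{G_n}(v)>M}\deg_{G_n}(v)\le D|V_n|/\psi(M)$. Splitting the vertices of $N^j[B_0]$ at a growing auxiliary threshold $M_{j+1}$ into low-degree vertices (which contribute at most $M_{j+1}|N^j[B_0]|$ new neighbors to $N^{j+1}[B_0]$) and high-degree vertices (whose total degree in $G_n$ is at most $D|V_n|/\psi(M_{j+1})$), I obtain the recursion
\[
\frac{|N^{j+1}[B_0]|}{|V_n|}\ \le\ (M_{j+1}+1)\frac{|N^j[B_0]|}{|V_n|}\ +\ \frac{D}{\psi(M_{j+1})}.
\]
I would close this recursion by choosing $M_{t-1},\ldots,M_1$ in reverse order, each taken large enough so that the term $D/\psi(M_{j+1})$ dominates, after multiplication by all later thresholds, by any prescribed fraction of $\varepsilon$; finally $M_0$ is chosen so large that $\phi(M_0)^{-1}\prod_{j\ge 1}(M_j+1)\le\varepsilon$. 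All such choices are possible precisely because $\phi,\psi\to+\infty$. The resulting bound $|N^{t-1}[B_0]|/|V_n|\le\varepsilon$ is uniform in $n$; setting $M:=(M_0+1)^t$ then delivers the required tail estimate, and Prokhorov's theorem provides the convergent subsequence.
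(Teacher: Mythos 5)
The paper does not prove this statement itself; it is invoked as a black box from Benjamini, Lyons and Schramm (Theorem 3.1 in \cite{MR3316916}), so there is no internal proof to compare against. Your argument is correct and self-contained, and it follows the standard route: Prokhorov's theorem, plus the fact that a subset of $\rGs$ is relatively compact iff its balls of each fixed radius have uniformly bounded cardinality, reduce the claim to the tail estimate $\sup_n\cL_{G_n}\left(|\cB_t(G,o)|>M\right)\le\varepsilon$. The genuine content is the propagation from the high-degree set $B_0$ to its $(t{-}1)$-neighborhood, and this is exactly where the super-linearity $\phi(d)\gg d$ (rather than mere $\phi\to\infty$) is needed: the bound $\sum_{v:\,\deg_{G_n}(v)>M}\deg_{G_n}(v)\le D|V_n|/\psi(M)$, with $\psi(M)=\inf_{d\ge M}\phi(d)/d\to\infty$, controls the total edge-boundary of the remaining high-degree vertices at each layer, and choosing the thresholds $M_{t-1},\dots,M_1,M_0$ in reverse order closes the recursion. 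Two cosmetic points: in the initial Markov step you should write $\inf_{d>M_0}\phi(d)$ rather than $\phi(M_0)$, since $\phi$ is not assumed monotone (equivalently, replace $\phi$ at the outset by its non-decreasing minorant, which preserves both hypotheses); and the final threshold condition should carry the constant $D$ and a split of $\varepsilon$, say $D\,\phi(M_0)^{-1}\prod_{j\ge1}(M_j+1)\le\varepsilon/2$. Neither affects the validity of the proof.
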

In particular, this criterion applies to the sequence $(G_n)_{n\ge 1}$ in Theorem \ref{th:reform}, with $\phi(d)=d\log d$. This will ensure that we can ``pass to the limit'' and study the question of existence of non-negatively curved expanders directly at the level of local weak limits. 

\paragraph{Unimodularity.} Local weak limits of finite graphs happen to enjoy a powerful  distributional invariance, which is directly inherited from the fact that the root is equally likely to be any vertex under the local profile (\ref{def:profile}). More precisely, a measure $\cL\in\cP(\rGs)$ is  called \emph{unimodular} if it satisfies
\begin{eqnarray}
\label{def:mtp}
\cL\left[\sum_{x\in V_G}f(G,o,x)\right]  & = & \cL \left[\sum_{x\in V_G}f(G,x,o)\right],
\end{eqnarray}
for every Borel function $f\colon \rGss\to [0,\infty]$, where $\rGss$ denotes the analogue of the space $\rGs$ with two distinguished roots instead of one.  Thinking of $f(G,o,x)$ as an amount of mass sent from  $o$ to $x$, the identity (\ref{def:mtp}) expresses the fact that the expected masses received and sent by the root coincide. This  \emph{Mass Transport Principle}  is clearly satisfied when $\cL$ is the local profile of a finite graph, and is  preserved under weak convergence. Thus, we obtain the following   fundamental result.
\begin{theorem}[Inherited unimodularity]\label{lm:uni}
All local weak limits of finite graphs are unimodular.
\end{theorem}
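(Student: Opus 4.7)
The plan is to verify the mass transport principle (\ref{def:mtp}) first at the level of each finite graph $G_n$, and then transfer it to the weak limit $\cL$ by combining weak convergence of local observables with a truncation argument. The underlying fact is that unimodularity is a ``closed'' property in $\cP(\rGs)$ under local weak convergence, once one understands which observables to test against.

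For the finite-graph step, I unwind the definition (\ref{def:profile}) to write
\begin{align*}
\cL_{G}\!\left[\sum_{x\in V_{G}}f(G,o,x)\right] \;=\; \frac{1}{|V_{G}|}\sum_{o\in V_{G}}\sum_{x\in V_{G}}f(G,o,x),
\end{align*}
where the inner sum is implicitly restricted to the connected component of $o$. Renaming the two dummy summation indices $o\leftrightarrow x$ leaves the right-hand side invariant but turns the left-hand side into $\cL_{G}\!\left[\sum_{x\in V_{G}}f(G,x,o)\right]$. Thus every finite-graph profile $\cL_{G}$ is unimodular, for any non-negative Borel $f$.

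The second, more delicate step is to pass this property to the weak limit. By a monotone-class argument combined with monotone convergence, it suffices to verify (\ref{def:mtp}) for $f\colon\rGss\to[0,\infty)$ bounded, continuous, and supported in $\{(G,o,x):\dd_{G}(o,x)\le R\}$ for some finite $R$. For such an $f$, the aggregated functionals
\begin{align*}
F_{+}(G,o):=\sum_{x\in V_G}f(G,o,x), \qquad F_{-}(G,o):=\sum_{x\in V_G}f(G,x,o)
\end{align*}
depend only on $\cB_R(G,o)$, and are thus $R$-local, hence continuous on $(\rGs,\dloc)$. For every $K>0$ the truncations $F_{\pm}\wedge K$ are bounded continuous, and local weak convergence $\cL_{G_n}\to\cL$ yields $\cL_{G_n}[F_{\pm}\wedge K]\to \cL[F_{\pm}\wedge K]$. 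Combined with the finite-graph identity $\cL_{G_n}[F_{+}]=\cL_{G_n}[F_{-}]$, this should give $\cL[F_{+}\wedge K]=\cL[F_{-}\wedge K]$ for every $K$, after which monotone convergence in $K$ closes the argument.

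The main obstacle is the truncation bookkeeping: $F_{\pm}$ are genuinely unbounded (they grow with $|\cB_R(G,o)|$), so one must keep the tail errors $\cL_{G_n}[(F_{\pm}-K)_{+}]$ under control uniformly in $n$, without the luxury of a moment condition on the degrees. My plan to circumvent this is to apply the finite MTP not to $f$ directly, but to the modified observable $f(G,o,x)\,\mathbf{1}_{F_{+}(G,o)\le K}$ (and symmetrically with $F_{-}$), whose outgoing sum at the root is by construction bounded by $K$. This converts the unbounded quantities $F_\pm$ into bounded, continuous local observables at each truncation scale, and it is the finite-graph MTP itself (rather than any a priori integrability) that recycles the restriction on one side into a symmetric restriction on the other. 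In this way the equality $\cL[F_{+}]=\cL[F_{-}]$ is extracted from purely bounded-continuous testing, i.e.\ from the very definition of local weak convergence.
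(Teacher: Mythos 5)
Your finite-graph step (the double counting over $V_G$) and the overall plan match the paper, which simply asserts that the Mass Transport Principle ``is clearly satisfied when $\cL$ is the local profile of a finite graph, and is preserved under weak convergence'' without further detail. Your attempt to spell out the second step, however, contains a genuine gap. Truncating to $g_K(G,o,x):=f(G,o,x)\,\mathbf{1}_{F_+(G,o)\le K}$ does bound the outgoing sum $\sum_x g_K(G,o,x)=F_+(G,o)\mathbf{1}_{F_+(G,o)\le K}\le K$, but the incoming sum
\begin{eqnarray*}
\sum_{x\in V_G} g_K(G,x,o) & = & \sum_{x\colon \dd_G(x,o)\le R} f(G,x,o)\,\mathbf{1}_{F_+(G,x)\le K}
\end{eqnarray*}
is \emph{not} bounded: the cutoff is imposed per sender $x$, while the number of senders within distance $R$ of $o$ is governed by $\deg_G(o)$, which nothing controls here. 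The finite MTP equates the \emph{expectations} of the two sides under $\cL_{G_n}$; it does not transfer the pointwise bound $\le K$ from the outgoing to the incoming observable, so the assertion that ``the finite-graph MTP itself recycles the restriction'' and that the equality is extracted ``from purely bounded-continuous testing'' does not hold as written.

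Two standard repairs exist. Either make the truncation symmetric in the two roots, taking $g_K(G,o,x):=f(G,o,x)\,\mathbf{1}_{F_+(G,o)\le K}\,\mathbf{1}_{F_-(G,x)\le K}$, so that \emph{both} aggregated observables are bounded by $K$, local, and continuous off a null set of levels $K$; then pass to the limit and let $K\to\infty$ by monotone convergence. Or keep your one-sided truncation and note that the incoming observable is non-negative, local, and lower semicontinuous, so the Portmanteau inequality gives $\liminf_n\cL_{G_n}[\cdot]\ge\cL[\cdot]$ on that side, yielding $\cL[F_+]\ge\cL[F_-]$ after $K\to\infty$, and conclude by symmetry. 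Either route completes the proof; as written, though, your argument asserts a boundedness that is false, and the conceptual confusion between an equality of expectations and a pointwise bound is precisely the point the truncation was meant to address.
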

 Whether the converse holds  is a notoriously hard open problem with deep implications, see  \cite{MR2354165,MR2776719,MR3316916}.
Let us here record a first simple consequence of unimodularity, which will be useful.
\begin{lemma}[Everything shows at the root, see Lemma 2.3 in \cite{MR2354165}]\label{lm:root}Suppose that $\cL\in\cP(\rGs)$ is unimodular, and let $B\subseteq\rGs$ be a Borel set such that $\cL(B)=1$. Then we also have,
\begin{eqnarray*}
\cL\left(\{\forall x\in V_G,\ (G,x)\in B\}\right) & = & 1.
\end{eqnarray*}
\end{lemma}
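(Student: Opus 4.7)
The strategy is a direct application of the Mass Transport Principle to a well-chosen transport. Define the function $f \colon \rGss \to [0,\infty]$ by
\begin{eqnarray*}
f(G,o,x) & := & \1_{\{(G,x)\notin B\}}.
\end{eqnarray*}
This is Borel because the map $(G,o,x)\mapsto (G,x)$ from $\rGss$ to $\rGs$ is measurable (forgetting one root is a continuous operation in the local metric), and $B$ is Borel in $\rGs$. Note that $f$ does not depend on $o$ at all, but that is perfectly allowed.

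The first step is to compute the two sides of the identity (\ref{def:mtp}) for this choice of $f$. On the right-hand side, since $f(G,x,o) = \1_{\{(G,o)\notin B\}}$ does not depend on the summation variable $x$, we get
\begin{eqnarray*}
\cL\left[\sum_{x\in V_G} f(G,x,o)\right] & = & \cL\left[|V_G|\cdot \1_{\{(G,o)\notin B\}}\right].
\end{eqnarray*}
By the hypothesis $\cL(B)=1$, the indicator vanishes $\cL$-almost surely, so the integrand is zero $\cL$-almost surely (with the convention $\infty\cdot 0=0$), and the right-hand side equals $0$.

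The Mass Transport Principle then forces the left-hand side to vanish as well:
\begin{eqnarray*}
\cL\left[\sum_{x\in V_G}\1_{\{(G,x)\notin B\}}\right] & = & 0.
\end{eqnarray*}
Since the integrand is a non-negative integer-valued random variable with zero expectation, it must vanish $\cL$-almost surely. In other words, with $\cL$-probability one, every vertex $x\in V_G$ satisfies $(G,x)\in B$, which is exactly the desired conclusion.

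\paragraph{Main obstacle.} There is no real difficulty here: the whole argument reduces to choosing the right transport function and invoking (\ref{def:mtp}). The only minor point worth checking is the measurability of $f$ on $\rGss$, together with the convention $\infty\cdot 0=0$ that handles the (possibly infinite) factor $|V_G|$ on the right-hand side. Everything else is bookkeeping.
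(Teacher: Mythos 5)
Your proof is correct and uses essentially the same argument as the paper: apply the Mass Transport Principle to an indicator of "the non-rooted vertex is outside $B$." Your choice $f(G,o,x)=\1_{\{(G,x)\notin B\}}$ is simply the root-swap of the paper's choice $f(G,o,x)={\bf 1}_{\{(G,o)\notin B\}}$, and since (\ref{def:mtp}) is symmetric under exchanging the two roots, the two computations are identical.
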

\begin{proof}Just apply the Mass Transport Principle with $f(G,o,x)={\bf 1}_{(G,o)\notin B}$.
\end{proof}

\paragraph{Stationarity.} Under a mild integrability condition and a trivial change of measure, unimodularity can be rephrased as \emph{reversibility} under a natural Markov chain on $\rGs$. We will here only need the weaker notion of \emph{stationarity}. Specifically, we say that a law $\cL\in\cP(\rGs)$  is \emph{stationary} if it is invariant for the Markov chain on $\rGs$ which, at each step, keeps the underlying graph as it is and moves the root according to the transition matrix $P_G$. In other words, $\cL$ is stationary if
\begin{eqnarray}
\label{def:statio}  \cL\left[\sum_{x\in V_G}P^t_G(o,x)h(G,x)\right] & = & \cL\left[h(G,o)\right],
\end{eqnarray}
for every Borel function $h\colon\rGs\to[0,\infty]$ and every $t\ge 0$ (equivalently, for $t=1$). The relation with unimodularity is summed up in the following classical lemma (see, e.g. \cite{MR2994841}).

\begin{lemma}[Degree-biasing] \label{lm:unimod}Let $\cL\in\cP(\rGs)$ be a unimodular law with
$
{\deg}(\cL) :=  \cL[\deg_G(o)] < \infty.
$
Then, the law $\widehat{\cL}\in\cP(\rGs)$ defined by the following change of measure is stationary:
\begin{eqnarray}
\label{def:biased}
\dd \wcL (G,o) & := & \frac{\deg_G(o)}{\deg(\cL)}\,\dd\cL(G,o).
\end{eqnarray}
\end{lemma}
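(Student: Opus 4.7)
The plan is to derive stationarity of $\widehat{\cL}$ directly from the Mass Transport Principle \eqref{def:mtp} by choosing a clever transport. Unravelling the change of measure, showing \eqref{def:statio} for $\widehat{\cL}$ (it suffices to do so at $t=1$, by iteration) amounts to proving
\begin{eqnarray*}
\cL\left[\sum_{x\in V_G}\deg_G(o)P_G(o,x)h(G,x)\right] & = & \cL\left[\deg_G(o)h(G,o)\right],
\end{eqnarray*}
for every Borel $h\colon\rGs\to[0,\infty]$. So the natural candidate for the mass transport is the two-rooted kernel
\begin{eqnarray*}
f(G,o,x) & := & \deg_G(o)P_G(o,x)\,h(G,x).
\end{eqnarray*}

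The main work then reduces to two elementary identities. First, the  fundamental symmetry of the lazy simple random walk on a graph,
\begin{eqnarray*}
\deg_G(o)P_G(o,x) & = & \deg_G(x)P_G(x,o),
\end{eqnarray*}
which follows by inspection of the three cases defining $P_G$ (the edge case, the laziness case $x=o$, and the zero case). Second, the trivial row-sum identity $\sum_{x\in V_G}P_G(o,x)=1$, which will turn the right-hand transport into the degree factor we want.

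Combining these, applying \eqref{def:mtp} to $f$ yields
\begin{eqnarray*}
\cL\left[\sum_{x\in V_G}\deg_G(o)P_G(o,x)h(G,x)\right]
&=&\cL\left[\sum_{x\in V_G}\deg_G(x)P_G(x,o)h(G,o)\right]\\
&=&\cL\left[h(G,o)\sum_{x\in V_G}\deg_G(o)P_G(o,x)\right]\\
&=&\cL\left[\deg_G(o)h(G,o)\right],
\end{eqnarray*}
which is exactly the desired identity (after dividing by $\deg(\cL)$). I do not expect any real obstacle here: the only thing to be a bit careful about is that $f$ must be a genuinely admissible test function in \eqref{def:mtp}, i.e. a Borel map $\rGss\to[0,\infty]$; but $P_G(o,x)$, $\deg_G(o)$ and $h(G,x)$ are each nonnegative Borel functions of $(G,o,x)$, so their product is too. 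The integrability hypothesis $\deg(\cL)<\infty$ is only used to make the normalization in \eqref{def:biased} well-defined, and plays no role in the Mass Transport calculation itself.
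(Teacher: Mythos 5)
Your proof is correct and takes essentially the same approach as the paper: a single application of the Mass Transport Principle. The paper uses the slightly simpler transport $f(G,o,x)=h(G,o)\,{\bf 1}_{\{x,o\}\in E_G}$ and leaves the (easy) algebraic unwinding of the laziness term to the reader, whereas you fold the transition kernel and the reversibility identity $\deg_G(o)P_G(o,x)=\deg_G(x)P_G(x,o)$ directly into the test function $f(G,o,x)=\deg_G(o)P_G(o,x)h(G,x)$ — the two choices are interchangeable.
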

\begin{proof}
Apply the Mass Transport Principle  to $\cL$ with $f(G,o,x)=h(G,o){\bf 1}_{\{x,o\}\in E_G}$.
\end{proof}
\begin{remark}[Mutual absolute continuity]\label{rk:absolute}
It follows  from (\ref{def:biased}) that the original  law $\cL$ and its degree-biased version $\wcL$ are mutually absolutely continuous. In other words, we have
\begin{eqnarray*}
\cL(B)=1 & \Longleftrightarrow & \wcL(B)=1,
\end{eqnarray*}
for any Borel set $B\subseteq \rGs$, allowing us to transfer results from one law to the other. 
\end{remark}
\subsection{Spectral radius, entropy and the Liouville property}
\label{sec:entropy}
Stationarity is a powerful property, because it enables the development of an \emph{ergodic theory} of random rooted graphs. See the inspiring works \cite{MR1336708} on Galton-Watson trees, \cite{MR2994841} on random rooted graphs, and \cite{MR3395463} on general random environments. In particular, a classical application of  Kingman's sub-additive ergodic theorem allows one to define the (quenched)   \emph{asymptotic entropy} of random walks on stationary random  graphs, as recalled in the following lemma. 
\begin{lemma}[Entropy]\label{lm:entropy}Let $\cL\in\cP(\rGs)$ be stationary  with $\cL[\log\deg_G(o)]<\infty$. Then the limit
\begin{eqnarray*}
\rH(G,o) & := & \lim_{t\to\infty}\frac{1}{t}\sum_{x\in V_G}P^t_G(o,x)\log\frac{1}{P_G^t(o,x)},
\end{eqnarray*}
exists $\cL-$almost-surely and in $L^1(\rGs,\cL)$, and does not depend on the choice of the root $o$.  
\end{lemma}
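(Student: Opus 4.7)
The natural strategy is to apply Kingman's subadditive ergodic theorem on an enlarged probability space that records both the random rooted graph and an entire trajectory of the lazy random walk. Conditionally on $(G,o)\sim\cL$, sample $X_0=o,X_1,X_2,\ldots$ with kernel $P_G$, and write $\bfP$ for the resulting joint law on rooted graphs equipped with a trajectory. The stationarity condition (\ref{def:statio}) is precisely the statement that the shift $T\colon(G,o;X_0,X_1,\ldots)\mapsto(G,X_1;X_1,X_2,\ldots)$ preserves $\bfP$. The relevant cocycle is $\psi_t:=-\log P_G^t(o,X_t)$, and the Markov lower bound $P_G^{s+t}(o,X_{s+t})\ge P_G^s(o,X_s)\,P_G^t(X_s,X_{s+t})$ immediately yields the subadditive inequality $\psi_{s+t}\le\psi_s+\psi_t\circ T^s$ on the joint space.

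For integrability, a direct computation of the one-step entropy of the lazy walk gives $\EE[\psi_1\mid(G,o)]=H_1(G,o)=\log 2+\tfrac12\log\deg_G(o)$, so $\EE[\psi_1]=\log 2+\tfrac12\,\cL[\log\deg_G(o)]<\infty$. Kingman's theorem therefore produces a $T$-invariant $\rH\in L^1(\bfP)$ with $\psi_t/t\to\rH$ both $\bfP$-almost-surely and in $L^1(\bfP)$. Since $H_t(G,o)=\EE[\psi_t\mid(G,o)]$, conditioning on $(G,o)$ and passing to the limit gives $\tfrac1tH_t(G,o)\to\EE[\rH\mid(G,o)]$ in $L^1(\cL)$, and I would define $\rH(G,o)$ as this conditional expectation. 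Upgrading to $\cL$-almost-sure convergence of the deterministic sequence $\tfrac1tH_t(G,o)$ can be done either by a standard Shannon--McMillan--Breiman concentration argument for stationary random walks, or by a direct subadditivity argument based on the chain rule $H_{s+t}(G,o)\le H_s(G,o)+\sum_yP_G^s(o,y)H_t(G,y)$ combined with stationarity of $\cL$.

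Root-independence is then a soft consequence of $T$-invariance: $\rH=\rH\circ T$ gives $\rH(G,o)=\rH(G,X_1)$ $\bfP$-almost surely, and iteration together with the irreducibility of the lazy walk on the connected component propagates this to $\rH(G,o)=\rH(G,x)$ for every $x\in V_G$. One can package this cleanly by applying Lemma \ref{lm:root} to the Borel set $B:=\{(G,o):\rH(G,o)=\rH(G,x)\text{ for every }x\in V_G\}$, after moving to the degree-biased measure $\widehat{\cL}$ of Lemma \ref{lm:unimod}, which is mutually absolutely continuous with $\cL$ by Remark \ref{rk:absolute}. The main obstacle I anticipate is exactly this last descent from the trajectory-measurable Kingman limit to a genuinely $(G,o)$-measurable one, where the $\log\deg$ moment assumption is essential because it is precisely what guarantees the integrability of $\psi_1$ and hence the applicability of Kingman in the first place.
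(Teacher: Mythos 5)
Your proposal follows the same overall strategy as the paper: run Kingman's subadditive ergodic theorem on the joint space of rooted graphs and lazy-walk trajectories with the cocycle $\psi_t=-\log P_G^t(X_0,X_t)$, check the subadditive inequality via the Markov lower bound, verify integrability from the $\log\deg$ moment (your explicit computation $H_1(G,o)=\log 2+\tfrac12\log\deg_G(o)$ is correct and is a cleaner version of the paper's terse ``$\EE[Z_{0,1}]<\infty$''), and then define $\rH(G,o)$ as the conditional expectation of the Kingman limit given $(G,o)$. Up to this point the two proofs are essentially identical, and you are if anything more careful than the paper in flagging that upgrading the $L^1$ convergence of $\tfrac1tH_t(G,o)$ to $\cL$-a.s. convergence of this conditionally averaged quantity requires an extra argument.

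Where the two proofs genuinely diverge is in the root-independence step, and there your argument has a gap. You write that ``$\rH=\rH\circ T$ gives $\rH(G,o)=\rH(G,X_1)$ $\bfP$-almost surely.'' This conflates the trajectory-measurable Kingman limit $Z_\infty$ (which is $T$-invariant) with its conditional expectation $\rH(G,o)=\EE[Z_\infty\mid(G,o)]$ (which is what you actually defined). Taking conditional expectation of the identity $Z_\infty=Z_\infty\circ T$ and using the Markov property only yields the \emph{harmonicity} $\rH(G,o)=\sum_x P_G(o,x)\rH(G,x)$, not the pointwise identity $\rH(G,o)=\rH(G,X_1)$. To get the latter directly you would indeed need the ``descent'' you flag as the main obstacle, namely that $Z_\infty$ is a.s.\ $(G,o)$-measurable; but that is a non-trivial Shannon--McMillan--Breiman-type statement, not a consequence of the $\log\deg$ moment assumption (which is used only for Kingman integrability). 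The paper sidesteps the descent entirely: it takes the subadditive inequality at $s=1$ to obtain $\rH(G,o)\le\sum_x P_G(o,x)\rH(G,x)$, applies the increasing convex map $\theta\mapsto(\theta-a)_+$ to preserve the super-harmonicity, and invokes stationarity to conclude that both sides have the same law, hence coincide $\cL$-a.s.; since this holds for every $a\ge0$ and $P_G(o,o)=1/2>0$, it deterministically forces $\rH(G,x)=\rH(G,o)$ for every neighbour $x$ of $o$, and then Lemma~\ref{lm:root} extends this to all of $V_G$. In short, the paper's convexity-plus-stationarity trick replaces the descent you are missing, and starts precisely from the harmonicity identity that your $T$-invariance argument actually yields.
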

We will henceforth simply write  $\rH(G)$ instead of $\rH(G,o)$, and call this the \emph{entropy} of $G$.
\begin{proof} 
Let $(G,o)$ have law $\cL$, and conditionally on  $(G,o)$, let $X=(X_t)_{t\ge 0}$ be a lazy simple random walk on $G$ starting from $X_0=o$. For  $0\le s\le t$, define a non-negative random variable $Z_{s,t}$ by 
\begin{eqnarray*}
Z_{s,t} & := & \log\frac{1}{P_G^{t-s}(X_s,X_t)}.
\end{eqnarray*}
Note that $Z_{t,s}\stackrel{d}{=}Z_{0,t-s}$. Indeed, for any Borel function $f\colon \dR_+\to\dR_+$, we have by definition
\begin{eqnarray*}
\EE\left[f(Z_{s,t})\right] & = & \EE\left[\sum_{x,y\in V_G}P_G^{s}(o,x)P_G^{t-s}(x,y)f\left(\log\frac{1}{P_G^{t-s}(x,y)}\right)\right]\\
& = & \EE\left[\sum_{y\in V_G}P_G^{t-s}(o,y)f\left(\log\frac{1}{P_G^{t-s}(o,y)}\right)\right]\\
& = & \EE\left[f(Z_{0,t-s})\right],
\end{eqnarray*}
where the second line uses the stationarity (\ref{def:statio}) with $h(G,o)=\sum_{y}P_G^{t-s}(o,y)f\left(\log\frac{1}{P_G^{t-s}(o,y)}\right)$.
Moreover, the trivial inequality $P_G^{t}(o,y)\ge P_G^s(o,x)P_G^{t-s}(x,y)$ readily  implies the sub-additive property
\begin{eqnarray}
\label{sub:add}
Z_{0,t} & \le & Z_{0,s}+Z_{s,t}.
\end{eqnarray}
Finally, the  assumption $\cL[\log\deg_G(o)]<\infty$ ensures that $\EE[Z_{0,1}]<\infty$. Consequently,  Kingman's sub-additive ergodic theorem (see, e.g. \cite[Theorem 14.44]{MR3616205}) guarantees the existence of a non-negative, integrable random variable $Z_\infty$ such that almost-surely and in $L^1$,
\begin{eqnarray*}
\frac{Z_{0,t}}{t} & \xrightarrow[t\to\infty]{} & Z_\infty.
\end{eqnarray*}
Averaging this convergence over the random walk $X$ (i.e., taking conditional expectation given the random rooted graph) yields the existence of the limit $\rH(G,o)$. By Lemma \ref{lm:root}, the same is true if $o$ is replaced by any $x\in V_G$. Moreover, the sub-additive property (\ref{sub:add}) with $s=1$ shows that 
\begin{eqnarray*}
\rH(G,o) & \le & \sum_{x\in V_G}P_G(o,x)\rH(G,x),
\end{eqnarray*}
$\cL-$almost-surely. Since $\theta\mapsto (\theta-a)_+$ is monotone and convex for $a\ge 0$, this inequality implies  
\begin{eqnarray*}
\forall a\ge 0,\quad \left(\rH(G,o)-a\right)_+ & \le & \sum_{x\in V_G}P_G(o,x)\left(\rH(G,x)-a\right)_+.
\end{eqnarray*}
 But  the two sides have the same law  by stationarity, so they must coincide $\cL-$almost-surely. The fact that this is true for all $a\ge 0$   deterministically forces the equality
$ \rH(G,x) =\rH(G,o)$ for all neighbours $x$ of $o$, and hence for all $x\in V_G$ by Lemma \ref{lm:root}. \end{proof}
\paragraph{The Liouville property.} One of the interests of asymptotic entropy   lies in its relation with the Liouville property.  A function $f\colon V_G\to \dR$ is called \emph{harmonic} on $G$ if $P_Gf=f$, where
\begin{eqnarray}
\label{def:P}
\forall x\in V_G,\quad (P_Gf)(x) & := & \sum_{y\in V_G}P_G(x,y)f(y).
\end{eqnarray}
This is trivially the case, in particular, when $f$ is constant. The graph $G$ has the \emph{Liouville property} if it admits no non-constant bounded harmonic function. For stationary random graphs, this functional-analytic property turns out to admit the following simple entropic characterization. 
\begin{theorem}[Entropic characterization of the Liouville property]\label{th:entropy}The  equivalence 
\begin{eqnarray*}
\rH(G)=0  & \Longleftrightarrow & G \textrm{ has the Liouville property},
\end{eqnarray*}
holds almost-surely under any stationary law $\cL\in\cP(\rGs)$  with $\cL[\log\deg_G(o)]<\infty$. 
\end{theorem}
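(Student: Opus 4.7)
My plan is to follow the approach pioneered by Avez and further developed by Kaimanovich--Vershik and Derriennic for random walks on groups, which adapts to the present setting thanks to the stationarity in (\ref{def:statio}). The argument will pass through the tail $\sigma$-algebra of the walk.

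Conditionally on $(G,o)$, let $X = (X_t)_{t\ge 0}$ denote the lazy random walk from $o$, and let $\mathcal{T} := \bigcap_{t\ge 0} \sigma(X_s : s \ge t)$ be its tail $\sigma$-algebra under the associated quenched law $\PP_o$. The first ingredient is the classical correspondence between bounded harmonic functions on $G$ and bounded $\mathcal{T}$-measurable random variables: for any bounded harmonic $f$ on $G$, the martingale $f(X_t)$ converges $\PP_o$-a.s.\ to some $\mathcal{T}$-measurable $f_\infty$ with $\EE_x[f_\infty] = f(x)$ for every $x$; conversely, for any bounded $\mathcal{T}$-measurable $F$, the function $x \mapsto \EE_x[F]$ is bounded and harmonic on the connected component of $o$. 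In particular, $G$ has the Liouville property if and only if $\mathcal{T}$ is $\PP_o$-trivial. The theorem thus reduces to showing: $\rH(G) = 0 \iff \mathcal{T}$ is $\PP_o$-trivial, $\cL$-a.s.

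The heart of the proof is then the Kaimanovich-type identity
\begin{equation*}
\rH(G) \;=\; H(X_1 \mid G, o) \;-\; H(X_1 \mid \mathcal{T}, G, o), \qquad \cL\text{-a.s.},
\end{equation*}
which expresses the asymptotic entropy as the conditional mutual information $I(X_1 ; \mathcal{T} \mid G, o)$. The integrability assumption $\cL[\log\deg_G(o)] < \infty$ makes all entropies finite, since the laziness of $P_G$ gives $H(X_1 \mid G, o) = \tfrac12 \log(4\deg_G(o))$. To prove the identity, I would decompose
\begin{equation*}
H(X_t \mid G, o) \;=\; H(X_1, \ldots, X_t \mid G, o) \;-\; H(X_1, \ldots, X_{t-1} \mid X_t, G, o),
\end{equation*}
and analyze each term separately. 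Combining the chain rule, the Markov property and stationarity (first integrated over $\cL$ via (\ref{def:statio}), then transferred pointwise using Lemma \ref{lm:root}), the first right-hand term grows linearly at rate $H(X_1\mid G, o)$. For the backward conditional entropy, the decrease $\sigma(X_t, X_{t+1}, \ldots) \downarrow \mathcal{T}$ combined with the continuity of conditional entropy (a Lévy-type martingale argument) gives $\tfrac1t H(X_1, \ldots, X_{t-1} \mid X_t, G, o) \to H(X_1 \mid \mathcal{T}, G, o)$. Subtracting and using Lemma \ref{lm:entropy} yields the identity.

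Once the identity is established, both implications are immediate: triviality of $\mathcal{T}$ forces $H(X_1 \mid \mathcal{T}) = H(X_1)$ and hence $\rH(G)=0$; conversely, $\rH(G)=0$ means $X_1$ is conditionally independent of $\mathcal{T}$ given $(G,o)$, and iterating along the Markov chain promotes this to independence of the full path from $\mathcal{T}$, forcing $\mathcal{T}$ to be $\PP_o$-trivial. The main obstacle will be justifying the pointwise identity: stationarity here operates at the level of the law $\cL$ on $\rGs$ rather than along a fixed realization of the walk, so one must carefully convert mean-level identities into $\cL$-a.s.\ statements. Lemma \ref{lm:root} (``everything shows at the root'') is the key tool for propagating almost sure equalities from the root to all vertices, and the $\log\deg$ integrability will repeatedly be used to apply dominated convergence and justify the Cesàro limits above.
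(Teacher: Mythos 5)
Your proposal aims to re-derive the Kaimanovich--Vershik entropy criterion from scratch in the stationary-random-graph setting, whereas the paper deliberately does \emph{not} do this: it cites the annealed entropy--tail equivalence to Benjamini and Curien \cite[Theorem 3.2]{MR2994841}, and the real work in the paper's proof is the upgrade from the annealed statement to the quenched ($\cL$-a.s.) one, which it carries out via ergodic decomposition of stationary laws \cite[Theorem 4.7]{MR2354165}. So the two routes are genuinely different in emphasis. That said, your route as sketched has several nontrivial gaps.

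First, the pointwise identity $\rH(G)=H(X_1\mid G,o)-H(X_1\mid\cT,G,o)$ is far from a routine adaptation of the group case. In the group case this identity relies on translation-invariance; here, $H(X_1\mid G,o)=\tfrac12\log(4\deg_G(o))$ is \emph{not} root-invariant while $\rH(G)$ is (by Lemma \ref{lm:entropy}), so the compensation must come entirely from $H(X_1\mid\cT,G,o)$, which is not obvious and is precisely the delicate content of \cite[Theorem 3.2]{MR2994841}. Your sketch that ``the first term grows linearly at rate $H(X_1\mid G,o)$'' is not correct pointwise: by the Markov property $\frac1t H(X_1,\dots,X_t\mid G,o)=\frac1t\sum_{s=1}^t\sum_x P_G^{s-1}(o,x)\,\tfrac12\log(4\deg_G(x))$, and under stationarity this Cesàro average converges (by Birkhoff) to a conditional expectation given the invariant $\sigma$-algebra, not to $\tfrac12\log(4\deg_G(o))$.

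Second, you propose to convert mean-level identities into a.s.\ ones using Lemma \ref{lm:root}, but that lemma only propagates an a.s.\ event from the root to all vertices of a fixed graph; it does \emph{not} close the gap between an annealed equivalence (``$\cL(A)=1\iff\cL(B)=1$'') and a quenched one (``$\cL(A\Delta B)=0$''). The paper handles exactly this point by observing that $A=\{G \text{ Liouville}\}$ and $B=\{\rH(G)=0\}$ are root-invariant events, so they are $\cL$-trivial under any \emph{ergodic} stationary law, and then invoking the ergodic decomposition of stationary laws. This step is missing from your argument and cannot be replaced by Lemma \ref{lm:root} alone. Finally, two smaller points: the correspondence you state between bounded harmonic functions and tail-measurable random variables actually goes through the \emph{invariant} $\sigma$-algebra, and passing from invariant to tail requires the laziness of $P_G$ (the paper cites \cite[Theorem 14.18]{MR3616205} for this); and the claim that ``$X_1\perp\cT$ given $(G,o)$, iterated, forces $\cT$ trivial'' needs an argument (one must show the whole path, not just $X_1$, is conditionally independent of $\cT$), which is again part of what \cite[Theorem 3.2]{MR2994841} establishes.
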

This remarkable result has a long history: it originates with the pioneering works of Avez  \cite{MR324741,MR353405,MR0507229}, and was then made famous in a celebrated paper of Kaimanovich and Vershik \cite{MR704539}. In the present setting of stationary random graphs, the  implication $\Longrightarrow$ was established by Benjamini and Curien \cite{MR2994841}, and refined by Benjamini, Duminil-Copin, Kozma and Yadin  \cite{MR3395463}. The converse $\Longleftarrow$  was proved by Carrasco Piaggio and Lessa \cite{MR3546392} (see also \cite{MR3813982}), but under an additional growth assumption. Since this is the implication that we are going to use, we need to  give more details. 

\begin{proof}[Proof of Theorem \ref{th:entropy}]Fix a connected graph $G$, and let $X=(X_t)_{t\ge 0}$ denote a lazy simple random walk on $G$ starting at some fixed vertex $o\in V_G$. Write ${\bf P}^G$ for its law, which is a probability measure on the product space $V^{\dZ_+}_G$. On this space, let $\cI$ denote the $\sigma-$field of all events which are invariant under the natural shift  $(x_t)_{t\ge 0}\mapsto(x_{t+1})_{t\ge 0}$. Then \cite[Proposition 14.12]{MR3616205} states that
\begin{eqnarray*}
G\textrm{ has the Liouville property} & \Longleftrightarrow & \cI\textrm{ is ${\bf P}^G-$trivial}.
\end{eqnarray*}
On the other hand, writing $\cT=\bigcap_{t=0}^\infty\sigma(x_t,x_{t+1},\ldots)$ for the tail $\sigma-$field on $V^{\dZ_+}_G$, we have
\begin{eqnarray*}
\cI\textrm{ is ${\bf P}^G-$trivial} & \Longleftrightarrow & \cT\textrm{ is ${\bf P}^G-$trivial},
\end{eqnarray*}
by Theorem \cite[Theorem 14.18]{MR3616205} and because $X$ is lazy. Finally, the equivalence
\begin{eqnarray*}
\cL\left(\cT\textrm{ is ${\bf P}^G-$trivial}\right)=1 & \Longleftrightarrow & \cL(\rH(G)=0)=1,
\end{eqnarray*}
was proved in  \cite[Theorem 3.2]{MR2994841} for any stationary law $\cL$ with $\cL[\log\deg_G(o)]<\infty$. Thus,
\begin{eqnarray}
\label{annealed}
\cL(G\textrm{ has the Liouville property})=1 & \Longleftrightarrow & \cL\left(\rH(G)=0\right)=1,
\end{eqnarray}
and this annealed statement will actually suffice for  the present paper. However, deducing the  quenched claim is easy, as we now explain. Define the events  $A:=\{G\textrm{ has the Liouville property}\}$ and $B:=\{\rH(G)=0\}$, and let $A\Delta B$ denote their symmetric difference. We want to show that
\begin{eqnarray}
\label{quenched}
\cL(A\Delta B)& = & 0,
\end{eqnarray}
for any stationary law $\cL$ with $\cL[\log\deg_G(o)]<\infty$. We already know this  if $A,B$ are $\cL-$trivial, thanks to (\ref{annealed}). Moreover, the events $A,B$ are clearly \emph{root-invariant}, in the sense that 
\begin{eqnarray*}
(G,o)\in A & \Longrightarrow & \{\forall x\in V_G,(G,x)\in A\}.
\end{eqnarray*}
Consequently, (\ref{quenched}) holds under the extra assumption that \emph{root-invariant events are $\cL-$trivial}. But this is known as \emph{ergodicity}, and any stationary law can be decomposed as a mixture of ergodic laws, by \cite[Theorem 4.7]{MR2354165}. Thus, (\ref{quenched}) extends to all stationary laws $\cL$ with $\cL[\log\deg_G(o)]<\infty$.
\end{proof}

\paragraph{Spectral radius.} The entropy $\rH(G)$ is related to several other fundamental graph-theoretical quantities, such as the \emph{speed},  \emph{growth}, or  \emph{spectral radius}, see \cite{MR3616205}. Let us recall the last notion. Fix a rooted graph $(G,o)\in\rGs$. For any $t,s\ge 0$, we trivially have
$
P_G^{t+s}(o,o)  \ge P_G^t(o,o)P_G^s(o,o).
$
By Fekete's lemma, we deduce that the limit
\begin{eqnarray}
\label{def:radius}
\varrho(G,o) & := & \lim_{t\to\infty}\left(P_G^t(o,o)\right)^{\frac 1 t},
\end{eqnarray}
exists in $(0,1]$. Moreover, the connectivity of $G$ together with the trivial inequality 
\begin{eqnarray*}
P^{t+2s}_G(o,o) & \ge & P^s_G(o,x)P^t_G(x,x)P^s_G(x,o),
\end{eqnarray*} shows that $\varrho(G,o)$ does not depend on the choice of the root $o$. Thus, we will henceforth simply write  $\varrho(G)$, and call this quantity the \emph{spectral radius} of $G$.
\begin{lemma}[Spectral radius vs entropy]\label{lm:radius}The inequality
\begin{eqnarray*}
 \rH(G) & \ge & 2\log\frac{1}{\varrho(G)},
\end{eqnarray*}
holds almost-surely under any stationary law $\cL$ with $ \cL[\log\deg_G(o)]<\infty$.
\end{lemma}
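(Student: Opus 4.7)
The key identity to extract is a Jensen-style lower bound for the return probability $P_G^{2t}(o,o)$ in terms of an entropic quantity. Reversibility of the lazy simple random walk (detailed balance with the degree measure) iterates to $\deg_G(o)P_G^t(o,x) = \deg_G(x)P_G^t(x,o)$, whence
\begin{eqnarray*}
P_G^{2t}(o,o) & = & \sum_{x\in V_G}P_G^t(o,x)P_G^t(x,o) \ = \ \sum_{x\in V_G}P_G^t(o,x)\cdot\frac{\deg_G(o)P_G^t(o,x)}{\deg_G(x)}.
\end{eqnarray*}
Reading the right-hand side as an expectation under the probability measure $P_G^t(o,\cdot)$ and applying Jensen's inequality to the concave function $\log$ yields
\begin{eqnarray*}
-\log P_G^{2t}(o,o) & \le & \sum_{x\in V_G} P_G^t(o,x)\log\frac{1}{P_G^t(o,x)}\,+\,F_t(G,o)\,-\,\log\deg_G(o),
\end{eqnarray*}
where $F_t(G,o):=\sum_x P_G^t(o,x)\log\deg_G(x)$. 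Dividing by $2t$ and sending $t\to\infty$, the left-hand side converges $\cL$-a.s. to $\log\frac{1}{\varrho(G)}$ by definition (\ref{def:radius}), while the first sum on the right converges $\cL$-a.s. and in $L^1$ to $\frac{1}{2}\rH(G)$ by Lemma \ref{lm:entropy}.

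The main obstacle is the residual degree-correction term $\frac{1}{2t}(F_t-\log\deg_G(o))$, which pointwise has no obvious sign. The key observation here is that the stationarity identity (\ref{def:statio}) applied with $h=\log\deg$ gives $\EE_\cL[F_t]=\EE_\cL[\log\deg_G(o)]<\infty$ for every $t$, so this term has identically zero expectation. Combined with the uniform bound $-\frac{1}{2t}\log P_G^{2t}(o,o)\le\log 2$ coming from laziness ($P_G^{2t}(o,o)\ge (1/2)^{2t}$), one can then take $\EE_\cL$ past the limit by dominated convergence, obtaining the annealed inequality
\begin{eqnarray*}
\EE_\cL\!\left[\log\frac{1}{\varrho(G)}\right] & \le & \frac{1}{2}\,\EE_\cL[\rH(G)].
\end{eqnarray*}

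To promote this annealed bound to the desired pointwise statement, I would use that both $\varrho(G)$ and $\rH(G)$ are root-independent (by Lemma \ref{lm:entropy} and the remark preceding the present lemma), hence root-invariant observables in the sense of Lemma \ref{lm:root}. Under any \emph{ergodic} stationary law, such observables are $\cL$-a.s. equal to deterministic constants, so the annealed inequality immediately forces the pointwise one. The general case then follows from the ergodic decomposition of stationary laws, cf. \cite[Theorem 4.7]{MR2354165}. The integrability hypothesis $\cL[\log\deg_G(o)]<\infty$ enters precisely to make rigorous the cancellation of the degree-correction term; without it, there is no obvious way to absorb that error.
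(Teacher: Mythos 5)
Your Jensen-plus-reversibility computation is exactly the one in the paper: both start from $P_G^{2t}(o,o)=\sum_x P_G^t(o,x)P_G^t(x,o)$, invoke detailed balance to write $P_G^t(x,o)=\deg_G(o)P_G^t(o,x)/\deg_G(x)$, apply concavity of the logarithm, and isolate the $t$-step entropy. Where you diverge is in disposing of the residual degree-correction term $\frac{1}{2t}(F_t-\log\deg_G(o))$. The paper observes that $\cL\bigl[\bigl|\sum_x P_G^t(o,x)\log(\deg_G(o)/\deg_G(x))\bigr|\bigr]\le 2\cL[\log\deg_G(o)]$ uniformly in $t$, so that after division by $t$ the correction vanishes in $L^1$; extracting a subsequence along which it vanishes a.s.\ then gives the pointwise inequality directly, with no further machinery. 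You instead only record that the correction has zero mean, take expectations to obtain an annealed inequality $\EE_\cL[\log(1/\varrho(G))]\le\frac{1}{2}\EE_\cL[\rH(G)]$, and then promote it to a pointwise one via ergodic decomposition, using that $\varrho$ and $\rH$ are root-invariant. That upgrade is sound (it is the same device the paper uses at the end of the proof of Theorem \ref{th:entropy}), so nothing is missing; but it is heavier than necessary. Notice that your own zero-mean observation already yields the uniform $L^1$ bound $\EE_\cL|F_t-\log\deg_G(o)|\le\EE_\cL[F_t]+\EE_\cL[\log\deg_G(o)]=2\cL[\log\deg_G(o)]$, from which the $L^1\to 0$ conclusion and the pointwise inequality follow without ever invoking ergodicity.
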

\begin{proof}For any rooted graph $(G,o)$ and any $t\ge 0$, we have by concavity 
\begin{eqnarray*}
\log\left(P_G^{2t}(o,o)\right) & = & \log\left(\sum_{x\in V_G}P_G^t(o,x)P^t_G(x,o)\right)\\
& \ge &   \sum_{x\in V_G}P_G^t(o,x)\log P_G^t(x,o)\\
& = & \sum_{x\in V_G}P_G^t(o,x)\log P_G^t(o,x)+\sum_{x\in V_G}P_G^t(o,x)\log\left(\frac{\deg_G(o)}{\deg_G(x)}\right),
\end{eqnarray*}
where the last line uses the reversibility $\deg_G(o)P_G^t(o,x)=\deg_G(x)P_G^t(x,o)$.
Dividing by $-2t$ and taking the limit as $t\to\infty$ in $L^1(\rGs,\cL)$  yields the claim, provided we can show that 
\begin{eqnarray*}
\frac{1}{t} \sum_{x\in V_G}P_G^t(o,x)\log\left(\frac{\deg_G(o)}{\deg_G(x)}\right) & \xrightarrow[t\to\infty]{L^1(\rGs,\cL)} & 0.
\end{eqnarray*}
But this follows from the crude bound
\begin{eqnarray*}
 \cL\left[\left|\sum_{x\in V_G}P_G^t(o,x)\log\left(\frac{\deg_G(o)}{\deg_G(x)}\right) \right|\right] & \le &  \cL\left[\sum_{x\in V_G}P_G^t(o,x)\left(\log \deg_G(o)+\log\deg_G(x)\right)\right]\\
& = & 2 \cL\left[\log \deg_G(o)\right],
\end{eqnarray*}
where the second line simply uses the stationarity property (\ref{def:statio}) with $h(G,o)=\log\deg_G(o)$. 
\end{proof}

\begin{remark}[Unimodular analogues] \label{rk:uni} By Lemma \ref{lm:unimod} and Remark \ref{rk:absolute}, all results in this section also apply to any unimodular law  $\cL\in\cP(\rGs)$  with $ \cL[\deg_G(o)\log\deg_G(o)]<\infty$.
\end{remark}

\section{Proof of the main result}
\label{sec:proof}
We are now ready to prove our main result. We work with  the formulation given in Theorem \ref{th:reform}.  Section \ref{sec:stage} below reduces it to two key results, which are then proved in Sections \ref{sec:curvature} and \ref{sec:spectrum}.
\subsection{Setting the stage}
\label{sec:stage}
Let $G_n=(V_n,E_n)$, $n\ge 1$ be finite graphs satisfying the assumptions of Theorem \ref{th:reform}, i.e.
\begin{eqnarray}
\label{assume:sparse2}
\sup_{n\ge 1}\left\{\frac{1}{|V_n|}\sum_{x\in V_n}\deg_{G_n}(x)\log\deg_{G_n}(x)\right\} & < & \infty;\\
\label{assume:nonneg2}
\forall\varepsilon>0,\quad \frac{1}{|E_n|}\,\mathrm{card}\{e\in E_n\colon \kappa_{G_n}(e)<- \varepsilon\} & \xrightarrow[n\to\infty]{}  & 0.
\end{eqnarray}
 Recall that our goal  is to establish 
\begin{eqnarray}
\label{poorexpansion2}
\forall\rho\in(0,1),\quad \liminf_{n\to\infty}\left\{\frac{1}{|V_n|}\,\mathrm{card}\{i\colon \lambda_i(G_n)>\rho\}\right\} &  > &  0.
\end{eqnarray}
By (\ref{assume:sparse2}) and Theorem \ref{th:tightness}, we may assume, upon extracting a subsequence if necessary, that 
\begin{eqnarray}
\label{lwc}
G_n & \xrightarrow[n\to\infty]{} & \cL,
\end{eqnarray}
for some $\cL\in\cP(\rGs)$. Note that $\cL$ is automatically unimodular by Theorem \ref{lm:uni}, and such that
\begin{eqnarray}
\cL\left[\deg_G(o)\log\deg_G(o)\right] & < & \infty.
\end{eqnarray}
Just like the degree, the  curvature is a local notion, hence it also ``passes to the limit'', i.e
\begin{eqnarray}
\label{kappaG}
\cL\left(\kappa(G)\ge 0\right) & = & 1.
\end{eqnarray}
\begin{proof}As already mentioned, the observable $f\colon (G,o)\mapsto\min_{x\sim o}\kappa_G(o,x)$  is $2-$local, hence continuous on $\rGs$. By the Portmanteau Theorem, we deduce that for any $\varepsilon>0$,
\begin{eqnarray*}
\cL\left(f<-\varepsilon\right) & \le & \liminf_{n\to\infty}\cL_{G_n}\left(f<-\varepsilon\right)\\
& = & \liminf_{n\to\infty}\left\{\frac{1}{|V_n|}\mathrm{card}\{o\in V_n\colon f(G_n,o)<-\varepsilon\right\}\\
& \le & \liminf_{n\to\infty}\left\{\frac{2}{|V_n|}\mathrm{card}\{e\in E_n\colon \kappa_{G_n}(e)<-\varepsilon\right\}\\
& = &  \cL\left[\deg_G(o)\right]\liminf_{n\to\infty}\left\{\frac{1}{|E_n|}\mathrm{card}\{e\in E_n\colon \kappa_{G_n}(e)<-\varepsilon\right\},
\end{eqnarray*}
where the last inequality follows from the observation that $\frac{2|E_n|}{|V_n|}\to  \cL[\deg_G(o)]$, by the continuity and uniform integrability of $(G,o)\mapsto \deg_G(o)$. Sending $\varepsilon\to 0$ yields $\cL(f<0)=0$, by (\ref{assume:nonneg2}). To conclude, we simply apply Lemma \ref{lm:root} to the event $B=\{f\ge 0\}$. 
\end{proof}
The first crucial step in our proof consists in deducing from (\ref{kappaG}) that the entropy is zero under $\cL$. This is the content of the following theorem, which will be proved in Section \ref{sec:curvature}.
\begin{theorem}[Non-negative curvature implies zero-entropy]\label{th:curvature} The implication
\begin{eqnarray*}
\kappa(G)\ge 0 & \Longrightarrow & \rH(G)=0
\end{eqnarray*}
holds almost-surely under any stationary law $\cL\in\cP(\rGs)$ satisfying $\cL\left[\log\deg_G(o)\right]<\infty$.
\end{theorem}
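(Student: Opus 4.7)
The plan is to reduce the claim $\rH(G) = 0$ to the Liouville property via Theorem~\ref{th:entropy}, and then to establish Liouville from the Wasserstein contraction afforded by non-negative curvature.

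Since $\cL$ is stationary with $\cL[\log \deg_G(o)] < \infty$, Theorem~\ref{th:entropy} gives the $\cL$-almost sure equivalence between $\{\rH(G) = 0\}$ and $\{G \text{ has the Liouville property}\}$. It therefore suffices to show that, $\cL$-almost surely on the event $\{\kappa(G) \ge 0\}$, every bounded harmonic function on $G$ is constant. Moreover, the edgewise hypothesis $\kappa_G(e) \ge 0$ iterates through the triangle inequality for $\cW_1$ into the global contraction
\begin{eqnarray*}
\cW_1\left(P_G^t(x, \cdot), P_G^t(y, \cdot)\right) & \le & \dd_G(x, y), \qquad x, y \in V_G,\ t \ge 0,
\end{eqnarray*}
which by Kantorovich--Rubinstein duality says that $P_G^t$ is non-expansive on Lipschitz functions with respect to $\dd_G$: one has $\|P_G^t \phi\|_{\mathrm{Lip}} \le \|\phi\|_{\mathrm{Lip}}$ for every such $\phi$.

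To deduce Liouville, I would approximate a bounded harmonic $f$ by Lipschitz functions via inf-convolution, $f^L(x) := \inf_y [f(y) + L\,\dd_G(x, y)]$, which is $L$-Lipschitz and converges pointwise to $f$ as $L \to \infty$. Combining the harmonicity $P_G^t f = f$, the Wasserstein contraction applied to the $f^L$, the stationarity of $\cL$ (to average over the walk's visits and exploit the ergodic structure), and unimodularity (Lemma~\ref{lm:root} together with Remark~\ref{rk:uni}) to pass from the root to all vertices, one should conclude that $f$ is constant $\cL$-almost surely.

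The main obstacle is precisely this last step: turning the first-moment Lipschitz non-expansivity of $P_G^t$ into a constancy statement for arbitrary bounded harmonic functions. This demands genuine use of the stationary, unimodular structure, likely through a sub-linear speed estimate for the random walk, since on a purely deterministic graph Wasserstein contraction alone does not force Liouville. An alternative route would bypass Liouville altogether by establishing sub-exponential volume growth $|B_t(o)|^{1/t} \to 1$ under $\kappa(G) \ge 0$ --- a Bishop--Gromov-type statement for Ollivier curvature --- which yields $\rH(G) = 0$ immediately via the trivial bound $H(P_G^t(o, \cdot)) \le \log|B_t(o)|$, since the support of $P_G^t(o, \cdot)$ lies inside the radius-$t$ ball. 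Either approach confronts the same central difficulty: upgrading a first-moment Wasserstein inequality into a rigidity statement strong enough to constrain the bounded harmonic functions or the ball cardinalities.
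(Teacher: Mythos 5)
You correctly identify the reduction to the Liouville property via Theorem~\ref{th:entropy}, and you correctly observe that $\kappa(G)\ge 0$ yields a $\cW_1$-contraction for $P_G^t$. This is indeed the paper's starting point. However, you then explicitly stop at what you call ``the main obstacle'': upgrading the non-expansivity of $P_G^t$ on Lipschitz functions to constancy of bounded harmonic functions. That obstacle is precisely the content of the proof, and neither of the two routes you sketch to close it is the one the paper uses, nor is either carried out.

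The mechanism the paper actually deploys is a \emph{quantitative path-coupling} argument, and it has two ingredients you do not have. First, non-negative curvature gives more than an optimal coupling that is distance-non-increasing: Lemma~\ref{lm:coupling} shows that, thanks to laziness, one may always choose an optimal coupling that assigns probability at least $\tfrac12\max\{1/\deg_G(x),1/\deg_G(y)\}$ to strictly decreasing the distance. Second, this makes $Z_t:=\dd_G(X_t,Y_t)$ a non-negative super-martingale whose conditional quadratic variation on the event $\{\tau>t\}$ is bounded below by $\tfrac{1}{2\deg_G(X_t)}$, and Lemma~\ref{lm:martingale} converts this variance lower bound into a quantitative estimate on the hitting time $\tau$ of zero. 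Combining these yields Proposition~\ref{pr:harmo}, which gives the bound
\[
\left|P_G^t f(x)-P_G^t f(y)\right|\ \le\ \frac{8\,\dd_G(x,y)}{a}+2\,\PP\!\left(\sum_{s=0}^{t-1}\frac{1}{\deg_G(X_s)}<2a^2\right)
\]
for bounded $f$. For harmonic $f$ one sends $t\to\infty$ and then $a\to\infty$; the second term vanishes precisely when $\sum_t 1/\deg_G(X_t)=\infty$ almost surely (Corollary~\ref{co:curvature}), and stationarity is used exactly once, through Birkhoff's Ergodic Theorem, to guarantee this divergence. Note that this argument works directly on bounded harmonic functions and never needs your inf-convolution Lipschitz approximation (whose pointwise limit control you would still have to translate into a statement about the original $f$).

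Your alternative route via sub-exponential volume growth is not viable here: non-negative Ollivier--Ricci curvature alone does not imply sub-exponential growth in the unbounded-degree setting the theorem is stated for, and even with bounded degrees establishing a Bishop--Gromov-type bound from Ollivier curvature is a separate, nontrivial question --- one would essentially be trading this theorem for a harder one. The ``sub-linear speed'' idea is closer in spirit to the paper's approach (both exploit that two coupled walks must meet), but it also is not carried out and by itself does not yield Liouville without a growth hypothesis. In short, your plan's scaffolding is right, but the central lemma --- that non-negative curvature forces coupled walks to meet, because the distance super-martingale has variance bounded below by an inverse degree --- is missing, and that is the whole engine of the proof.
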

In view of Remark \ref{rk:absolute}, this result also applies to any unimodular law $\cL\in\cP(\rGs)$ satisfying $\cL\left[\deg_G(o)\log\deg_G(o)\right]<\infty$, hence in particular to the limit $\cL$ in (\ref{lwc}). Combining this with Lemma \ref{lm:radius}, we immediately deduce that our local weak limit satisfies
\begin{eqnarray*}
\cL(\rho(G)=1) & = & 1.
\end{eqnarray*}
It turns out that this  simple condition suffices to guarantee (\ref{poorexpansion2}). This is the content of the following second result,  established in Section \ref{sec:spectrum} below, and which completes the proof of our main result.
\begin{theorem}[Zero-entropy implies poor spectral expansion]\label{th:spectrum} Let $G_n=(V_n,E_n), {n\ge 1}$ be finite graphs having local weak limit $\cL$, and suppose that $ \cL\left(\rho(G)=1\right)=1$. Then, for any $\rho<1$,
\begin{eqnarray*}
\liminf_{n\to\infty}\left\{\frac{1}{|V_n|}\,\mathrm{card}\left\{i\colon\lambda_i(G_n)> \rho\right\}\right\} & > & 0.
\end{eqnarray*}
\end{theorem}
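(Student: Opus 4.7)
\textbf{Proof plan for Theorem \ref{th:spectrum}.} The plan is to combine a trace identity, local weak convergence of the return probability, and a Jensen/dominated convergence argument that transfers the pointwise spectral-radius assumption into a ``global'' decay estimate.

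First I would work with the moments of the spectrum. Since $P_{G_n}$ is reversible, and $\operatorname{tr}(P_{G_n}^{2t}) = \sum_{i} \lambda_i(G_n)^{2t} = \sum_{x \in V_n} P_{G_n}^{2t}(x,x)$, dividing by $|V_n|$ gives
\begin{eqnarray*}
\frac{1}{|V_n|}\sum_{i=1}^{|V_n|} \lambda_i(G_n)^{2t} & = & \frac{1}{|V_n|}\sum_{x \in V_n} P_{G_n}^{2t}(x,x).
\end{eqnarray*}
For any fixed $t \ge 0$, the observable $(G,o) \mapsto P_G^{2t}(o,o)$ is bounded by $1$ and $t$-local, hence continuous on $(\rGs, \dloc)$. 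By the definition (\ref{def:lwc}) of local weak convergence, the right-hand side therefore converges, as $n\to\infty$, to
\begin{eqnarray*}
a_t & := & \cL\left[P_G^{2t}(o,o)\right].
\end{eqnarray*}

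Next, splitting the spectrum at $\rho$, with $N_n(\rho) := \mathrm{card}\{i \colon \lambda_i(G_n) > \rho\}$, I would bound
\begin{eqnarray*}
\frac{1}{|V_n|}\sum_{i} \lambda_i(G_n)^{2t} & \le & \frac{N_n(\rho)}{|V_n|} + \rho^{2t},
\end{eqnarray*}
using $\lambda_i(G_n) \in [0,1]$ (recall $P_G$ is lazy, so all eigenvalues are non-negative). Taking $n \to \infty$ yields
\begin{eqnarray*}
\liminf_{n\to\infty} \frac{N_n(\rho)}{|V_n|} & \ge & a_t - \rho^{2t},
\end{eqnarray*}
valid for every $t$. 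To finish, it suffices to exhibit a single $t_0$ for which $a_{t_0} > \rho^{2t_0}$.

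This is where the spectral-radius hypothesis enters. By (\ref{def:radius}), $P_G^{2t}(o,o)^{1/(2t)} \to \varrho(G)$, and by assumption $\varrho(G) = 1$ $\cL$-almost-surely. Since $P_G^{2t}(o,o)^{1/(2t)} \in [0,1]$, bounded convergence yields
\begin{eqnarray*}
\cL\left[P_G^{2t}(o,o)^{1/(2t)}\right] & \xrightarrow[t\to\infty]{} & 1.
\end{eqnarray*}
The concavity of $x \mapsto x^{1/(2t)}$ on $[0,\infty)$ and Jensen's inequality give
\begin{eqnarray*}
a_t^{1/(2t)} \ = \ \cL\left[P_G^{2t}(o,o)\right]^{1/(2t)} & \ge & \cL\left[P_G^{2t}(o,o)^{1/(2t)}\right],
\end{eqnarray*}
so $a_t^{1/(2t)} \to 1$. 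In particular, for any $\rho < 1$, one can fix $t_0$ large enough that $a_{t_0}^{1/(2t_0)} > \rho$, equivalently $a_{t_0} > \rho^{2t_0}$. Plugging this $t_0$ into the displayed inequality delivers the strictly positive lower bound $\liminf_n N_n(\rho)/|V_n| \ge a_{t_0} - \rho^{2t_0} > 0$, completing the proof.

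The main conceptual obstacle is the passage from a \emph{pointwise almost-sure} statement about the spectral radius $\varrho(G)$ to an \emph{averaged} lower bound on $a_t = \cL[P_G^{2t}(o,o)]$: the quantity $P_G^{2t}(o,o)$ itself can decay arbitrarily fast sub-exponentially (e.g.\ polynomially on $\mathbb{Z}^d$), so one cannot hope for $a_t$ to stay bounded away from $0$; the right substitute is the weaker $a_t^{1/(2t)} \to 1$, which is exactly what the Jensen/dominated convergence step produces and, fortunately, exactly what the two-term bound on $\frac{1}{|V_n|}\sum_i \lambda_i(G_n)^{2t}$ requires.
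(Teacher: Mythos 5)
Your proposal is correct, and it takes a genuinely different route from the paper. The paper introduces for each rooted graph a \emph{local spectral measure} $\mu_{(G,o)}$ on $[0,1]$, characterized by $\int \lambda^t \mu_{(G,o)}(\dd\lambda)=P_G^t(o,o)$, proves that $(G,o)\mapsto\mu_{(G,o)}$ is continuous on $\rGs$, uses the identity $\mu_{G_n}=\frac{1}{|V_n|}\sum_x\mu_{(G_n,x)}$ to deduce $\mu_{G_n}\to\mu_\cL:=\cL[\mu_{(G,o)}]$, and concludes with the Portmanteau theorem together with the observation that $\varrho(G)$ is exactly $\sup\supp\mu_{(G,o)}$, so that $\cL(\varrho(G)=1)=1$ forces $\mu_\cL((\rho,1])>0$. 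You bypass the local spectral measures entirely: the trace identity $\frac{1}{|V_n|}\sum_i\lambda_i^{2t}=\frac{1}{|V_n|}\sum_x P_{G_n}^{2t}(x,x)$ plus locality gives $a_t:=\cL[P_G^{2t}(o,o)]$ directly, the tail split yields $\liminf_n N_n(\rho)/|V_n|\ge a_t-\rho^{2t}$, and the Jensen step $a_t^{1/(2t)}\ge\cL[P_G^{2t}(o,o)^{1/(2t)}]\to 1$ (by bounded convergence and $\varrho(G)=1$ a.s.) lets you pick $t_0$ with $a_{t_0}>\rho^{2t_0}$. This is a moment-method version of the same idea and is arguably more elementary, needing only Fekete's lemma and real-analysis tools rather than the spectral theorem for the infinite-volume operator. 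What the paper's approach buys, and yours does not directly deliver, is the eigenvector-delocalization refinement (equation (\ref{eigenvectors}), Remark \ref{rk:final}): the statement that $\{x\colon\mu_{(G_n,x)}([\rho,1])\le\varepsilon\}$ is a vanishing fraction of vertices uniformly in $n$ requires pointwise control of $\mu_{(G_n,x)}$ rather than the spatial average, which is exactly what the continuity of $(G,o)\mapsto\mu_{(G,o)}$ plus the Portmanteau theorem provides and what the trace/moment average washes out.
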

In fact, a stronger statement about eigenvectors will be derived, as claimed in Remark \ref{rk:eigenvector}.
\subsection{Non-negative curvature implies zero entropy}
\label{sec:curvature}
Consider a connected graph $G$ and two vertices $x,y\in V_G$.
The proof of Theorem \ref{th:curvature} relies on the following intuitive idea: if $G$ has non-negative curvature and bounded degrees, then it takes time $O(\dd_G^2(x,y))$ for two random walks starting at  $x$ and $y$ to meet. This classical observation constitutes the very essence of the path coupling method of Bordewich and Dyer \cite{MR2316551}. It was later re-discovered and further developed by M\"unch \cite{2019arXiv190713514M}. We will here prove a refinement that does not require bounded degrees, see Corollary \ref{co:curvature} below.  Write $\cB_x,\cB_y$ for the balls of radius $1$ around $x$ and $y$, and recall that the Wassertein distance $\cW_1\left(P_G(x,\cdot),P_G(y,\cdot)\right)$ is defined as
\begin{eqnarray}
\label{def:cW1}
\cW_1\left(P_G(x,\cdot),P_G(y,\cdot)\right) & = & \inf_{\pi}\left\{\sum_{u\in\cB_x}\sum_{v\in \cB_y}\pi(u,v)\,\dd_G(u,v)\right\},
\end{eqnarray}
where the infimum runs over all probability distributions $\pi\in\cP(\cB_x\times\cB_y)$ with marginals $P_G(x,\cdot)$ and $P_G(y,\cdot)$. By compactness, the above infimum is actually achieved, and  the minimizers will be called \emph{optimal couplings}. As in  \cite{MR2316551,2019arXiv190713514M}, our first task consists in showing that an optimal coupling can always be chosen so as to assign a ``decent'' probability to the ``good'' set
\begin{eqnarray*}
\Gamma & := & \left\{(u,v)\in \cB_x\times \cB_y\colon \dd_G(u,v)<\dd_G(x,y)\right\}.
\end{eqnarray*}
The argument crucially uses the laziness of $P_G$ but is otherwise rather general.
\begin{lemma}[Good optimal couplings] \label{lm:coupling}If $x\ne y$, then there is an optimal coupling $\pi$ such that
\begin{eqnarray*}
\pi\left(\Gamma\right) & \ge & \frac{1}{2}\max\left\{\frac 1{\deg_G(x)},\frac 1{\deg_G(y)}\right\}.
\end{eqnarray*}
\end{lemma}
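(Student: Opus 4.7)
The plan is to start from an arbitrary optimal coupling $\pi$ and modify it by iterated \emph{mass swaps} --- each preserving optimality via the triangle inequality --- until the modified coupling concentrates mass $\tfrac{1}{2\deg_G(x)}$ on $\Gamma$. By the symmetry of the statement under exchanging $x$ and $y$, I may assume $\deg_G(x) \leq \deg_G(y)$, so that the target bound becomes $\tfrac{1}{2\deg_G(x)}$. Set $d := \dd_G(x,y)$ and fix a neighbor $z$ of $x$ lying on a geodesic from $x$ to $y$, so that $\dd_G(z,y) = d-1$. The total mass of $\pi$ sitting at $z$ on the source side is exactly $P_G(x, z) = \tfrac{1}{2\deg_G(x)}$, matching the desired lower bound.

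Since every $v \in \cB_y$ satisfies $\dd_G(z, v) \leq \dd_G(z, y) + \dd_G(y, v) \leq d$, the pair $(z, v)$ already lies in $\Gamma$ as soon as $\dd_G(z, v) \leq d-1$. It thus suffices to push the $z$-mass off of the ``bad'' pairs $(z, v)$ with $\dd_G(z, v) = d$. Given such a bad pair with $\pi(z, v) > 0$, I select an auxiliary pair $(u, y)$ in the support of $\pi$ with $u \neq z$, and transfer $\varepsilon := \min(\pi(z, v), \pi(u, y)) > 0$ units of mass from $\{(z, v), (u, y)\}$ to $\{(z, y), (u, v)\}$. The associated cost change is
\[
\varepsilon \bigl[\dd_G(z, y) + \dd_G(u, v) - \dd_G(z, v) - \dd_G(u, y)\bigr] = \varepsilon \bigl[\dd_G(u, v) - \dd_G(u, y) - 1\bigr] \leq 0,
\]
where the inequality uses $\dd_G(u, v) \leq \dd_G(u, y) + \dd_G(y, v) = \dd_G(u, y) + 1$ (and $v \sim y$ holds because $\dd_G(z, v) = d > d-1 = \dd_G(z, y)$ forces $v \neq y$). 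Optimality of $\pi$ forces this change to be zero, and the swap yields a new optimal coupling in which the total bad $z$-mass has strictly decreased by $\varepsilon$.

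The main obstacle I anticipate is verifying that an eligible $u \neq z$ with $\pi(u, y) > 0$ is always available whenever a bad pair is present --- this is where laziness is crucial. Indeed, the full mass $P_G(y, y) = \tfrac{1}{2}$ at $y$ on the target side must be absorbed by pairs $(\cdot, y)$, while at most $\pi(z, y) \leq P_G(x, z) = \tfrac{1}{2\deg_G(x)} \leq \tfrac{1}{2}$ of it can be supplied by $z$ alone; the only way all of it comes from $z$ is the degenerate scenario $\deg_G(x) = 1$ with $\pi(z, y) = \tfrac{1}{2}$, which forces $\pi(z, v) = 0$ for every $v \neq y$ --- meaning no bad pair exists in the first place. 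A straightforward finiteness argument (each swap either removes a bad pair from the finite support of $\pi$ or else zeroes out one $\pi(u, y)$ entry, and once all $(u, y)$-entries with $u \neq z$ vanish we are necessarily in the degenerate no-bad-pair regime) shows that the iteration terminates after finitely many steps. The resulting optimal coupling $\widetilde{\pi}$ satisfies $\widetilde{\pi}(z, v) = 0$ for every $v$ with $\dd_G(z, v) = d$, so all of its $z$-mass lies inside $\Gamma$, yielding $\widetilde{\pi}(\Gamma) \geq P_G(x, z) = \tfrac{1}{2\deg_G(x)}$, as desired.
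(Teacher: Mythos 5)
Your proof is correct, and it takes a genuinely different route from the paper's, though it runs on the same engine. The paper argues by contradiction from an extremal coupling: by compactness, pick among all optimal couplings one $\pi$ that maximizes $\pi(\Gamma)$; assuming $\pi(\Gamma)<\frac{1}{2\deg_G(x)}$, it locates a mass-carrying pair $(x_0,y_0)\in(A\times\cB_y)\setminus\Gamma$ and a mass-carrying pair $(x_1,y)$ with $x_1\notin A$, where $A=\{u\in\cB_x\colon\dd_G(u,y)<\dd_G(x,y)\}$ (laziness enters via $P_G(y,y)=\tfrac12$, exactly as in your argument), and performs a single $2\times 2$ swap $(x_0,y_0),(x_1,y)\mapsto(x_0,y),(x_1,y_0)$ that preserves the cost but strictly increases $\pi(\Gamma)$ --- contradiction. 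You instead fix a geodesic neighbor $z$ of $x$ and iterate the same kind of swap, tracking not $\pi(\Gamma)$ but the mass that $z$ places outside $\Gamma$, driving it to zero. The shared engine is: $z$ carries mass $P_G(x,z)=\frac{1}{2\deg_G(x)}$, satisfies $\dd_G(z,\cdot)\le\dd_G(x,y)$ on all of $\cB_y$, and laziness guarantees a counterpart $(u,y)$ with $u\ne z$ whenever a bad pair remains. Your constructive version avoids the extremal choice, but pays for it with a termination argument and a degenerate-case check ($\deg_G(x)=1$); both are handled correctly, though the termination step could be stated more crisply: no swap ever adds mass to a pair $(z,v)$ with $v\ne y$ or to a pair $(u,y)$ with $u\ne z$, so the sum of the number of bad pairs and the number of eligible $u$'s is non-increasing and strictly drops at each swap, and it is finite to begin with. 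One small remark: your WLOG $\deg_G(x)\le\deg_G(y)$ is needed here, since your explicitly constructed coupling only sees $z$; the paper's extremal $\pi$ would give both bounds for the \emph{same} coupling, though it, too, invokes symmetry in its closing sentence.
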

\begin{proof}  
By compactness, we can find an optimal coupling $\pi$ which, among all optimal couplings, maximizes   $\pi(\Gamma)$. Suppose for a contradiction that this ``doubly optimal'' coupling satisfies
\begin{eqnarray}
\label{absurd}
 \pi\left(\Gamma\right) & < & \frac{1}{2\deg_G(x)}.
\end{eqnarray}
The set ${A}:=\{u\in \cB_x\colon (u,y)\in \Gamma\}$ is not empty, since it contains the first vertex on a geodesic from $x$ to $y$. Thus, 
$
\pi(A\times \cB_y) \ge  1/(2\deg_G(x)).
$
In view of (\ref{absurd}), this forces  $\pi((A\times \cB_y)\setminus\Gamma)>0$, i.e.
\begin{eqnarray}
\label{exists:ab}\exists (x_0,y_0)\in (A\times \cB_y)\setminus\Gamma,\quad \pi(x_0,y_0)  & \ge &  \varepsilon,
\end{eqnarray}
for some $\varepsilon>0$. On the other hand, we have
$
\pi(A\times\{y\}) + \pi(A^c\times\{y\})  =  P_G(y,y) \ = \ \frac{1}{2}.
$
This forces $\pi(A^c\times\{y\}) > 0$, because $\pi(A\times\{y\})\le \pi(\Gamma)<\frac{1}{2}$. In other words, 
\begin{eqnarray}
\label{exists:z}
\exists x_1\in A^c,\quad \pi(x_1,y) & \ge & \varepsilon,
\end{eqnarray}
provided $\varepsilon>0$ is chosen small enough. We now use the vertices $x_0,y_0,x_1$ found at (\ref{exists:ab})-(\ref{exists:z}) to construct a new coupling $\wpi$ which contradicts the optimality of $\pi$. For all $(u,v)\in \cB_x\times \cB_y$, we set
\begin{eqnarray*}
\wpi(u,v) & := & \left\{
\begin{array}{ll}
\pi(u,v) & \textrm{ if }u\notin\{x_0,x_1\}\textrm{ and }b\notin\{y_0,y\};\\
\pi(u,v)-\varepsilon & \textrm{ if }(u,v)=(x_0,y_0)\textrm{ or }(u,v)=(x_1,y);\\
\pi(u,v)+\varepsilon & \textrm{ if }(u,v)=(x_0,y)\textrm{ or }(u,v)=(x_1,y_0).
\end{array}
\right.
\end{eqnarray*}
By construction, $\wpi$ is non-negative on $\cB_x\times \cB_y$ and has the same marginals as $\pi$. Thus, it is a coupling of $P_G(x,\cdot),P_G(y,\cdot)$. This coupling is moreover optimal, since
\begin{eqnarray*}
\sum_{u\in \cB_x}\sum_{v\in \cB_y}\dd_G(u,v)\left(\wpi(u,v)-\pi(u,v)\right) & = & \varepsilon\left(\dd_G(x_0,y)+\dd_G(x_1,y_0)-\dd_G(x_0,y_0)-\dd_G(x_1,y)\right)\\
& \le & \varepsilon\left(\dd_G(x,y)-1+\dd_G(x_1,y_0)-\dd_G(x,y)-\dd_G(x_1,y)\right)\\
& \le & 0,
\end{eqnarray*}
where the first inequality uses $x_0\in A$ and $(x_0,y_0)\notin\Gamma$, while the second uses the triangle inequality  $\dd_G(x_1,y_0)\le \dd_G(x_1,y)+\dd_G(y,y_0)$. Finally, since $\Gamma$ contains $(x_1,y)$ but not $(x_0,y_0),(x_1,y)$, we have
\begin{eqnarray*}
\wpi(\Gamma) & \ge & \pi(\Gamma)+\varepsilon,
\end{eqnarray*}
contradicting the definition of $\pi$. Thus, (\ref{absurd}) can not be true, and the claim follows by symmetry.
\end{proof}
We will also need the following technical lemma, which is of independent interest and quantifies the intuition that non-negative super-martingales that ``move a lot'' must ``quickly'' hit zero.  

\begin{lemma}[Non-negative super-martingales quickly hit  zero]\label{lm:martingale}Let  $\tau:=\inf\{t\ge 0\colon Z_t=0\}$ be the hitting time of zero by a  non-negative super-martingale $Z=(Z_t)_{t\ge 0}$. Suppose that  $Z_0=z$, and that all increments $(Z_{t+1}-Z_t)_{t\ge 0}$ are upper-bounded by a constant $K$. 
Then, 
\begin{eqnarray*}
\PP\left(\tau\ge t\right) & \le & z\left(\frac{2a+K-z}{a^2}\right)+\PP\left(\tau\ge t,\sum_{s=0}^{t-1}W_s< a^2\right),
\end{eqnarray*}
for all $t\in\dZ_+,a>0$, where $W_s=\EE\left[(Z_{s+1}-Z_s)^2|\cF_s\right]$ and   $(\cF_s)_{s\ge 0}$ is the underlying filtration.
\end{lemma}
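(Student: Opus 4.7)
My plan is to prove the bound via an optional stopping argument applied to a carefully crafted quadratic supermartingale, complemented by Doob's maximal inequality. The guiding idea is that, on the event $\{\tau \geq t,\,\sum_{s<t}W_s \geq a^2\}$, the process has simultaneously accumulated a lot of conditional variance while staying positive, which is hard to reconcile with a non-negative supermartingale of bounded increments.

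First, I would record the Doob decomposition $Z_s = z + M_s - A_s$, with $M$ a martingale and $A$ a non-decreasing predictable process ($M_0=A_0=0$), so that $\mathbb{E}[\Delta Z_s\mid\mathcal{F}_s] = -\Delta A_s\le 0$. A direct computation using the identity $(c-Z_{s+1})^2 - (c-Z_s)^2 = -2(c-Z_s)\Delta Z_s + (\Delta Z_s)^2$ then yields, for any constant $c$,
\[
\mathbb{E}\bigl[(c-Z_{s+1})^2 - (c-Z_s)^2 \,\big|\, \mathcal{F}_s\bigr] \;=\; 2(c-Z_s)\,\Delta A_s + W_s,
\]
so that $R_s := (c-Z_s)^2 - \sum_{r<s} W_r$ is a submartingale on the set $\{Z_s \leq c\}$. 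Equivalently, the concave process $\phi(Z_s) + \sum_{r<s} W_r$, with $\phi(x):=x(2c-x)$, is a supermartingale there.

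Second, I would localise via three stopping times: $T:=\inf\{s:\sum_{r<s}W_r\ge a^2\}$, $\rho:=\inf\{s:Z_s\ge c\}$, and $\sigma:=\tau\wedge T\wedge \rho\wedge t$. The tentative choice $c = a + K/2$ is natural, since then $\phi(z) = z(2a+K-z)$ matches the numerator of the claimed bound. For $s<\sigma$ we have $Z_s<c$, so the supermartingale property applies throughout $[0,\sigma]$. Optional stopping gives $\mathbb{E}[\phi(Z_\sigma)] + \mathbb{E}[\sum_{r<\sigma} W_r] \leq \phi(z)$, and the increment bound yields $Z_\sigma \leq c+K = 2a+K = 2c$, which keeps $\phi(Z_\sigma)\ge 0$; thus $\mathbb{E}[\sum_{r<\sigma} W_r] \leq z(2a+K-z)$, and Markov's inequality bounds $\mathbb{P}(\sum_{r<\sigma}W_r \geq a^2)$ by exactly $z(2a+K-z)/a^2$. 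The final step is to decompose the target event: on $\{\sigma = T\}$ the Markov bound applies directly, while on $\{\sigma = \rho\}$ Doob's maximal inequality for non-negative supermartingales provides $\mathbb{P}(\rho \le \tau\wedge t)\le z/c$.

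The main technical obstacle, I expect, is balancing these two contributions to recover \emph{exactly} the constant $z(2a+K-z)/a^2$ without an extraneous Doob term. A naive sum produces $z(2a+K-z)/a^2 + 2z/(2a+K)$, so some sharper bookkeeping is required: for instance, exploiting the tighter overshoot estimate $(c-Z_\sigma)^2 \leq K^2$ on the event $\{\sigma=\rho\}$ inside the moment inequality, or redefining $\rho$ so that Doob's overshoot is absorbed into the submartingale bound. Once the constants are aligned, the argument reduces to the clean supermartingale-plus-optional-stopping scheme described above, with the $2a+K$ transparently arising as the worst-case value of $Z_\sigma$ (the threshold $2a$ plus the maximal one-step jump $K$).
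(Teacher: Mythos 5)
Your plan captures the right mechanism — optional stopping on a quadratic auxiliary process to accumulate $\sum W_s$, then Markov plus a union bound — and it is close in spirit to the paper's proof. But as written it contains a genuine arithmetic slip and an acknowledged (and real) gap, and the choice of stopping time is the source of both.

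First, the slip: with $c=a+K/2$ you have $c+K=a+\tfrac{3K}{2}$, which is \emph{not} $2a+K=2c$; the chain ``$Z_\sigma\le c+K=2a+K=2c$'' only holds when $K=2a$. In general $c+K\le 2c$ requires $K\le 2a$, so for $K>2a$ the overshoot can push $Z_\sigma$ past $2c$, making $\phi(Z_\sigma)=Z_\sigma(2c-Z_\sigma)$ negative, and the conclusion $\EE\bigl[\sum_{r<\sigma}W_r\bigr]\le\phi(z)$ does not follow from the stopped supermartingale inequality. Second, even granting that, your event decomposition leaves the escape event $\{\sigma=\rho<T\wedge t\wedge\tau\}$, on which $\sum_{r<\sigma}W_r<a^2$ and Markov is useless; you correctly see that Doob only gives $z/c=2z/(2a+K)$ for it, which is an extraneous additive term, and your suggested repairs (tighter overshoot control, redefining $\rho$) do not obviously close the gap.

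The paper avoids both problems by \emph{not} trying to make a single potential $\phi$ produce the whole constant, and by keeping the stopping time minimal: it sets $\sigma:=\inf\{s\colon Z_s\ge a\}\wedge t$ (no $T$, no $\rho$, no $\tau$ inside $\sigma$, threshold $a$ rather than $a+K/2$). Two separate estimates then combine additively. (i) Optional stopping of $Z$ itself at $\sigma\wedge\tau$ gives $a\,\PP(\sigma<\tau\wedge t)\le\EE[Z_{\sigma\wedge\tau}]\le z$, hence $\PP(\sigma<\tau\wedge t)\le z/a$. (ii) The identity $W_s=\EE[Z_{s+1}^2-Z_s^2\mid\cF_s]+2Z_s\EE[Z_s-Z_{s+1}\mid\cF_s]$, together with $Z_s\le a$ on $\{\sigma>s\}$ and the overshoot bound $Z_\sigma\le a+K$, yields (after summing and telescoping) $\EE\bigl[\sum_{s<\sigma}W_s\bigr]\le (K+a-z)z$, so Markov gives $\PP\bigl(\sum_{s<\sigma}W_s\ge a^2\bigr)\le (K+a-z)z/a^2$. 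Crucially, on $\{\sigma\wedge\tau\ge t\}$ one has $\sigma=t$, so $\sum_{s<\sigma}W_s=\sum_{s<t}W_s$, and the union bound produces exactly
$z/a+(K+a-z)z/a^2=z(2a+K-z)/a^2$ plus the residual event $\{\tau\ge t,\sum_{s<t}W_s<a^2\}$ — no stray Doob term. In short, the numerator $2a+K-z$ is \emph{not} $\phi(z)/z$ for a single quadratic $\phi$; it decomposes as $a+(a+K-z)$, the first piece from the escape probability and the second from the second-moment bound, and your attempt to fold both into one threshold $c$ is what forces the extra term.
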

\begin{proof}First note that the process $Z$ is trivially square-integrable, because $Z_t\in[0,z+Kt]$ for each $t\ge 0$. Now fix $t\ge 0$ and $a>0$, and consider the bounded stopping time 
\begin{eqnarray*}
\sigma & := & \inf\left\{s\ge 0\colon Z_s\ge a\right\}  \wedge t.
\end{eqnarray*}
Using the Optional Stopping Theorem, the non-negativity of $Z$ and the definition of $\sigma$, we have
\begin{eqnarray*}
z & \ge & \EE\left[Z_{\sigma\wedge\tau}\right] \\ &  \ge &  \EE\left[Z_{\sigma\wedge\tau}{\bf 1}_{(\sigma<\tau\wedge t)}\right]\\ & \ge & a \PP\left(\sigma<\tau\wedge t\right).
\end{eqnarray*}
On the other hand, observe that for all $s\ge 0$, we may rewrite $W_s$ as 
\begin{eqnarray*}
W_s & = & \EE\left[Z_{s+1}^2-Z_s^2|\cF_s\right]  +2Z_s\EE[Z_{s}-Z_{s+1}|\cF_s].
\end{eqnarray*}
Note that the second conditional expectation is non-negative by assumption. Moreover, we have $Z_s\le a$ on the event $\{\sigma>s\}$, which is in $\cF_s$. Thus, 
\begin{eqnarray*}
W_s{\bf 1}_{\sigma>s} & \le & \EE\left[\left(Z_{s+1}^2-Z_s^2\right){\bf 1}_{\sigma>s}|\cF_s\right]  +2a\EE\left[\left(Z_{s}-Z_{s+1}\right){\bf 1}_{\sigma>s}|\cF_s\right].
\end{eqnarray*}
Taking expectations and summing over all $s\ge 0$, we obtain
\begin{eqnarray*}
\EE\left[\sum_{s=0}^{\sigma-1}W_s\right]& \le & \EE\left[Z_{\sigma}^2\right] -2a\EE[Z_\sigma]-z^2+2az\\
& \le & (K+a-z)z,
\end{eqnarray*}
where the second inequality follows from the observations that $Z_\sigma\le K+a$ and $\EE[Z_\sigma]\le z$. Let us now use these two estimates to conclude. By union bound, we have
\begin{eqnarray*}
\PP\left(\tau\ge t\right) & \le & \PP\left(\sigma<\tau\wedge t\right)+\PP\left(\sigma\wedge\tau\ge t\right)\\
 & \le &  \PP\left(\sigma<\tau\wedge t\right)+\PP\left(\tau\ge t,\sum_{s=0}^{\sigma-1}W_s\ge \sum_{s=0}^{t-1}W_s\right)\\
 & \le &  \PP\left(\sigma<\tau\wedge t\right)+ \PP\left(\sum_{s=0}^{\sigma-1}W_s\ge a^2\right)+\PP\left(\tau\ge t,\sum_{s=0}^{t-1}W_s<a^2\right)\\
 & \le & \frac{z}{a}+\frac{(K+a-z)z}{a^2}+ \PP\left(\tau\ge t,\sum_{s=0}^{t-1}W_s<a^2\right).
\end{eqnarray*}
This is exactly the claimed bound.
\end{proof}

Combining these two lemmas, we may now deduce the following estimate, which exploits non-negative curvature to control the action of $P_G$ on the variations of bounded observables.
\begin{proposition}[Variational estimate via non-negative curvature]\label{pr:harmo}Let  $G$ be a connected graph with $\kappa(G)\ge 0$.  Then, for any  $f\colon V_G\to [-1,1]$, any vertices $x,y\in V_G$, and any $a>0,t\in\dZ_+$, 
\begin{eqnarray*}
{|P^t_Gf(x)-P^t_Gf(y)|} & \le & \frac{8\dd_G(x,y)}{a}  + 2\PP\left(\sum_{s=0}^{t-1}\frac{1}{\deg_G(X_s)}<2a^2\right),
\end{eqnarray*}
where $X$ denotes a lazy random walk on $G$ starting from $x$.
\end{proposition}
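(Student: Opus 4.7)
The plan is a path-coupling argument. I would construct a coupling $(X_s, Y_s)_{s \geq 0}$ of two lazy simple random walks on $G$ with $X_0 = x$ and $Y_0 = y$: while $X_s \neq Y_s$, sample $(X_{s+1}, Y_{s+1})$ from the ``good'' optimal coupling of $P_G(X_s, \cdot)$ and $P_G(Y_s, \cdot)$ furnished by Lemma \ref{lm:coupling}; once the walks meet, let them move together forever. Each marginal is then a lazy simple random walk on $G$, so in particular $X$ is the walk appearing in the statement. Setting $\tau := \inf\{s \geq 0 : X_s = Y_s\}$, the fact that $f$ takes values in $[-1,1]$ gives
\[
|P_G^t f(x) - P_G^t f(y)| \;=\; \left|\EE[f(X_t) - f(Y_t)]\right| \;\leq\; 2\,\PP(\tau > t),
\]
so the task reduces to bounding $\PP(\tau > t)$.

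The key step is to analyze the distance process $Z_s := \dd_G(X_s, Y_s)$. Non-negative curvature gives $\cW_1(P_G(u, \cdot), P_G(v, \cdot)) \leq \dd_G(u,v)$ for every pair $(u,v)$ (extending from edges to arbitrary pairs by the triangle inequality for $\cW_1$ along a geodesic), and since each coupling step is optimal, this translates into the supermartingale property of $(Z_s)$. Laziness of $P_G$ yields the uniform increment bound $|Z_{s+1} - Z_s| \leq 2$. Moreover, Lemma \ref{lm:coupling} ensures that on $\{Z_s > 0\}$ the event $Z_{s+1} \leq Z_s - 1$ has conditional probability at least $\frac{1}{2\deg_G(X_s)}$, whence
\[
W_s \;=\; \EE\left[(Z_{s+1} - Z_s)^2 \mid \cF_s\right] \;\geq\; \frac{1}{2\,\deg_G(X_s)}.
\]

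It then remains to feed $(Z_s)$ into Lemma \ref{lm:martingale} with $z = \dd_G(x,y)$ and $K = 2$. The deterministic term in the conclusion of that lemma reads $\dd_G(x,y)(2a + 2 - \dd_G(x,y))/a^2$, which an elementary case analysis bounds by $8\dd_G(x,y)/a$ (the only regime in which this numerical inequality could fail is the one where $8\dd_G(x,y)/a \geq 2$, in which case the target estimate follows trivially from $|P_G^t f(x) - P_G^t f(y)| \leq 2$). The residual term $\PP(\tau \geq t, \sum_{s < t} W_s < a^2)$ is dominated, via the variance lower bound on $W_s$, by $\PP\!\left(\sum_{s=0}^{t-1} 1/\deg_G(X_s) < 2a^2\right)$, and the announced estimate drops out. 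I expect the only genuinely delicate point in this plan to be verifying that the step-by-step ``good'' coupling is a valid coupling of two lazy random walks with the intended marginals; everything downstream is a mechanical combination of the two preceding lemmas.
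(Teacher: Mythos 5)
Your proposal matches the paper's proof essentially line for line: same coupling built from Lemma \ref{lm:coupling}, same reduction to $2\PP(\tau>t)$, same super-martingale $Z_s=\dd_G(X_s,Y_s)$ with the same two ingredients (non-negative curvature via the triangle inequality for $\cW_1$ gives the supermartingale property; the good coupling gives $W_s\ge 1/(2\deg_G(X_s))$ on $\{\tau>s\}$), and the same final appeal to Lemma \ref{lm:martingale} with $K=2$. The only slip is numerical: since the factor $2$ from $|P_G^tf(x)-P_G^tf(y)|\le 2\PP(\tau>t)$ multiplies the deterministic term, what you actually need is $\dd_G(x,y)(2a+2-\dd_G(x,y))/a^2\le 4\dd_G(x,y)/a$ (which holds for $a\ge 1/2$ since $\dd_G(x,y)\ge 1$), not the $8\dd_G(x,y)/a$ you wrote; your fallback to the trivial bound $|P_G^tf(x)-P_G^tf(y)|\le 2$ when $a$ is small still closes the gap, so the argument survives, but the intermediate constant should be $4$, not $8$.
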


\begin{proof}Let $(X,Y)$ be the Markov chain on $V_G\times V_G$ which, from any state $(x,y)\in V_G\times V_G$, draws the next state according to the ``good'' optimal coupling  of $P_G(x,\cdot),P_G(y,\cdot)$ described in Lemma \ref{lm:coupling}. We use the standard notations $\PP_{(x,y)}(\cdot),\EE_{(x,y)}[\cdot]$ to specify the choice of the initial state. Since the two coordinates $X,Y$ are marginally distributed as lazy random walks on $G$, we have
\begin{eqnarray*}
\left|P^t_Gf(x)-P^t_Gf(y)\right| 
& = & \left|\EE_{x,y}\left[f(X_t)\right]-\EE_{x,y}\left[f(Y_t)\right]\right|\\
& \le & \EE_{x,y}\left[\left|f(X_t)-f(Y_t)\right|\right]\\
& \le & 2\PP_{x,y}\left(X_t\ne Y_t\right)\\
& \le & 2\PP_{x,y}\left(\tau> t\right),
\end{eqnarray*}
where $\tau=\inf\{t\ge 0\colon X_t=Y_t\}$ denotes the meeting time of the two walkers. Note that $\tau$ is also the hitting time of zero by the non-negative process $Z=(Z_t)_{t\ge 0}$ defined as follows: 
\begin{eqnarray*}
\forall t\ge 0,\quad Z_t & := & \dd_G(X_t,Y_t).
\end{eqnarray*}
We claim that $Z$ is a super-martingale w.r.t. the natural filtration  $(\cF_t)_{t\ge 0}$ associated with $(X,Y)$. Indeed, by the Markov property and the optimality of the chosen couplings, this claim reduces to 
\begin{eqnarray*}
\cW_1\left(P_G(x,\cdot),P_G(y,\cdot)\right) & \le &  \dd_G(x,y),
\end{eqnarray*}
for all $x,y\in V_G$. But this inequality readily follows from the assumption $\kappa_G(x,y)\ge 0$ in the case $\{x,y\}\in E_G$, and it then automatically extends to all $x,y\in V_G$ by the triangle inequality of  $\cW_1(\cdot,\cdot)$ (see, e.g.,  \cite{MR1964483}). On the other hand, Lemma \ref{lm:coupling} ensures that on the event $\{\tau> t\}$, 
\begin{eqnarray*}
\EE_{x,y}\left[(Z_{t+1}-Z_t)^2|\cF_t\right] & \ge & \frac{1}{2\deg_G(X_t)}.
\end{eqnarray*}
Finally, note that the distance between the two walkers can not increase by more than $2$ at each step. Thus, we may invoke Lemma \ref{lm:martingale} to conclude that
\begin{eqnarray*}
\PP_{x,y}\left(\tau\ge t\right) & \le & 2\dd_G(x,y)\left(\frac{a+1}{a^2}\right)+\PP_{x,y}\left(\sum_{s=0}^{t-1}\frac{1}{\deg_G(X_s)}< 2a^2\right)\\
& \le & \frac{4\dd_G(x,y)}{a}+\PP_{x,y}\left(\sum_{s=0}^{t-1}\frac{1}{\deg_G(X_s)}< 2a^2\right),
\end{eqnarray*}
where the second line follows from the first if  $a\ge 1$, and is trivial otherwise.
\end{proof}
In particular, this applies to any bounded harmonic function $f$, after a trivial normalization. Since  $P^t_Gf=f$ for all $t\ge 0$, we may send $t\to\infty$ and then $a\to\infty$ in the resulting estimate to obtain the following key result, which ensures that  non-negatively curved graphs satisfy the Liouville property, provided they have a ``decent proportion'' of vertices with ``reasonable'' degree. 
\begin{corollary}[Liouville property and non-negative curvature]\label{co:curvature}Let $G$ be a connected graph with $\kappa(G)\ge 0$. Fix $o\in V_G$ and suppose that the simple random walk $X$ on $G$ starting from $o$ satisfies
\begin{eqnarray}
\label{degrees}
\PP\left(\sum_{t=0}^\infty\frac{1}{\deg_G(X_t)} = \infty\right) & = & 1.
\end{eqnarray}
 Then, $G$ has the Liouville property.
\end{corollary}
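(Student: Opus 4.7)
The plan is to apply Proposition \ref{pr:harmo} directly to an arbitrary bounded harmonic function, then send the parameters $t$ and $a$ to infinity in the correct order. Let $f \colon V_G \to \dR$ be bounded and harmonic; after rescaling I may assume $f$ takes values in $[-1,1]$. My goal will be to show that $f(o) = f(y)$ for every $y \in V_G$; since $f$ was arbitrary, this will force $G$ to have the Liouville property.

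Fix $y \in V_G$. Applying Proposition \ref{pr:harmo} with $x = o$ gives, for every $a > 0$ and every $t \in \dZ_+$,
\[
\left| P_G^t f(o) - P_G^t f(y) \right| \;\le\; \frac{8\,\dd_G(o,y)}{a} \,+\, 2\,\PP\left(\sum_{s=0}^{t-1} \frac{1}{\deg_G(X_s)} < 2a^2\right),
\]
where $X$ is the lazy simple random walk on $G$ started at $o$ --- precisely the walk for which hypothesis (\ref{degrees}) is stated. Since $f$ is harmonic, $P_G^t f = f$ for every $t \ge 0$, so the left-hand side equals $|f(o)-f(y)|$ and is in particular independent of $t$.

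The crucial observation is that for each fixed $a > 0$, the events $\{\sum_{s=0}^{t-1} 1/\deg_G(X_s) < 2a^2\}$ are decreasing in $t$, and their intersection is contained in the event $\{\sum_{s=0}^\infty 1/\deg_G(X_s) \le 2a^2\}$, which has probability zero by hypothesis (\ref{degrees}). Letting $t \to \infty$ with $a$ held fixed therefore yields $|f(o)-f(y)| \le 8\,\dd_G(o,y)/a$, after which sending $a \to \infty$ forces $f(o) = f(y)$.

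I do not expect any serious obstacle here: all the analytic heavy lifting (the good optimal coupling of Lemma \ref{lm:coupling}, the super-martingale hitting estimate of Lemma \ref{lm:martingale}, and their combination in Proposition \ref{pr:harmo}) is already done. The only genuine subtlety is the order of limits --- one must send $t \to \infty$ \emph{before} $a \to \infty$, which is legitimate precisely because harmonicity renders the left-hand side independent of $t$, while hypothesis (\ref{degrees}) controls the full tail sum for every fixed $a$.
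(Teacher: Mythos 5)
Your proof is correct and is essentially the argument the paper gives (in the sentence immediately preceding the corollary): normalize the bounded harmonic function to take values in $[-1,1]$, plug it into Proposition \ref{pr:harmo}, use $P_G^t f = f$, and send $t\to\infty$ then $a\to\infty$, with hypothesis (\ref{degrees}) killing the probability term in the $t\to\infty$ limit. Your observation that the events $\{\sum_{s=0}^{t-1}1/\deg_G(X_s)<2a^2\}$ decrease to a null event is precisely the correct justification for the order of limits, which the paper leaves implicit.
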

A simple situation where the above condition trivially holds is that where $G$ has bounded degrees. In that case,   the Liouville property was recently established by Jost, Münch, and  Rose  \cite{jost2019liouville}. Our relaxation allows for arbitrary large degrees, as long as the random walk can avoid them from times to times. This is the case under any stationary law by Birkhoff's Ergodic Theorem, allowing us to prove Theorem \ref{th:curvature}. 
\begin{proof}[Proof of Theorem \ref{th:curvature}]Let $(G,o)$ have law $\cL$ and, conditionally on $(G,o)$, let $X$ be a lazy random walk starting from the root. Then the process $Z=(Z_t)_{t\ge 0}$ defined by 
\begin{eqnarray*}
\forall t\ge 0,\quad Z_t & := & \frac{1}{\deg_G(X_t)}
\end{eqnarray*}
is stationary, in the usual sense that its law is invariant under the shift $(z_t)_{t\ge 0}\mapsto (z_{t+1})_{t\ge 0}$ on $[0,1]^{\dZ_+}$. Thus, Birkhoff's Ergodic Theorem (see, e.g. \cite[Theorem 14.43]{MR3616205}) ensures that 
\begin{eqnarray*}
\frac{1}{t}\sum_{s=0}^{t-1}Z_s & \xrightarrow[t\to\infty]{} &  \EE[Z_1|\rI],
\end{eqnarray*}
almost-surely, where $\rI$ is the invariant $\sigma-$algebra. Since $Z_1$ is almost-surely positive, we deduce 
\begin{eqnarray*}
\sum_{s=0}^{\infty}Z_s & = &  \infty,
\end{eqnarray*}
almost-surely. In other words, the random graph $(G,o)$ satisfies (\ref{degrees}) almost-surely. By the above corollary, this implies that $G$ has the Liouville property almost-surely on the event $\{\kappa(G)\ge 0\}$. By Theorem \ref{th:entropy}, we conclude that $\rH(G)=0$ almost-surely on the same event.
\end{proof}

\subsection{Zero entropy implies poor spectral expansion}
\label{sec:spectrum}
This final section is devoted to proving Theorem \ref{th:spectrum}, which relates the eigenvalues of finite graphs  to the spectral radius of their local weak limits. If $G$ is a finite graph, the $N=|V_G|$ eigenvalues $\lambda_1(G)\ge\ldots\ge\lambda_{N}(G)$ of its transition matrix $P_G$ can be conveniently encoded into a probability measure $\mu_G\in\cP([0,1])$, called the \emph{empirical eigenvalue distribution} of the matrix $P_G$:
\begin{eqnarray*}
\mu_G & := & \frac{1}{N}\sum_{i=1}^{N}\delta_{\lambda_i(G)}.
\end{eqnarray*}
It turns out that the large-size asymptotics of this fundamental object can be understood directly at the level of local weak limits. When $P_G$ is replaced with the more standard adjacency matrix, this classical observation is the starting point of a rich and well-established theory, see the comprehensive introductory survey \cite{MR3792625} by Bordenave, and the references therein. 
\paragraph{Local spectral measures.} 
The transition kernel $P_G$ of a graph $G$ can be viewed  as a linear operator acting  via (\ref{def:P}) on  the Hilbert space 
\begin{eqnarray*}
\ell^2(G) & := & \left\{f\in \dC^{V_G}\colon 
 \sum_{o\in V_G}\deg_G(o)|f(o)|^2 < \infty\right\}, 
\end{eqnarray*}
with inner product $\langle f,g\rangle   =   \sum_{o\in V_G}\deg_G(o)\overline{f(o)}g(o)$. 
The stochasticity, laziness and reversibility 
\begin{eqnarray*}
\sum_{y\in V_G}P_G(x,y)=1, \qquad P_G(x,x)\ge 1/2, \qquad \deg_G(x)P_G(x,y) = \deg_G(y)P_G(y,x),
\end{eqnarray*}
easily (and classically) imply that $P_G$ is a positive contraction on $\ell^2(G)$, i.e.
\begin{eqnarray*}
\forall f\in\ell^2(G),\qquad 0 \ \le \ \langle f,P_Gf\rangle\ \le \langle \ f,f\rangle.
\end{eqnarray*}
In particular, for each $o\in V_G$, the spectral theorem for self-adjoint operators ensures the existence of a \emph{local spectral measure} $\mu_{(G,o)}\in\cP([0,1])$, characterized by the moment identity
\begin{eqnarray}
\label{def:moments}
\forall t\ge 0,\quad  \int_{0}^1 \lambda^t\mu_{(G,o)}(\dd\lambda) & = & P^t_G(o,o).
\end{eqnarray}
As we will now see,  $\mu_{(G,o)}$ can be interpreted as the local contribution of  $o$ to the spectrum of $P_G$.  Local spectral measures are a  powerful tool to investigate the mixing properties of graphs, see  \cite{MR3892273}.

\paragraph{The finite case.} When $G$ is finite with $N$ vertices, there is an orthonormal basis $(\phi_1,\ldots,\phi_{N})$  of $\ell^2(G)$ consisting of eigenvectors of $P_G$ with eigenvalues $\lambda_1(G),\ldots,\lambda_{N}(G)$, and we easily find
\begin{eqnarray}
\label{spectralmeasure}
\mu_{(G,o)} & = & \sum_{i=1}^{N}\deg_G(o)|\phi_i(o)|^2\delta_{\lambda_i(G)}.
\end{eqnarray}
Thus, the local spectral measure $\mu_{(G,o)}$ is a mixture of Dirac masses located at the various eigenvalues of $P_G$, and weighted by the squared amplitudes of the corresponding eigenvectors at  $o$. Moreover, thanks to the orthonormality of $(\phi_1,\ldots,\phi_{N})$, the identity (\ref{spectralmeasure}) readily implies  
\begin{eqnarray}
\label{localglobal}
\mu_G & = & \frac{1}{|V_G|}\sum_{o\in V_G}\mu_{(G,o)}.
\end{eqnarray}
In other words, the empirical eigenvalue distribution of a finite graph $G$ coincides with the spatial average of its local spectral measures. 

\paragraph{Spectral continuity.} In light of  (\ref{def:lwc}), it is tempting to pass to the limit in the formula (\ref{localglobal}) along a convergent sequence of finite graphs $(G_n)_{n\ge 1}$. This is made rigorous by the following continuity principle. As usual, $\cP([0,1])$ is here equipped with the topology of weak convergence.
\begin{lemma}[Spectral continuity]\label{lm:continuity}The map $(G,o) \mapsto \mu_{(G,o)}$ is continuous on $\rGs$. 
In particular, if a sequence of graphs $(G_n)_{n\ge 1}$ admits a local weak limit $\cL$, then
\begin{eqnarray*}
\mu_{G_n}(\dd\lambda) & \xrightarrow[n\to\infty]{} & \mu_\cL(\dd\lambda)\ :=\  \cL\left[\mu_{(G,o)}(\dd\lambda)\right].
\end{eqnarray*}
\end{lemma}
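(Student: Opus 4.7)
The plan is to first establish the pointwise continuity claim by a moment argument, and then deduce the global statement by plugging a suitable bounded continuous test function into the definition of local weak convergence.

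For the continuity of $(G,o)\mapsto \mu_{(G,o)}$, I would exploit the fact that every $\mu_{(G,o)}$ is supported on the compact interval $[0,1]$. On such a space, weak convergence of probability measures is equivalent to the convergence of all moments, since polynomials are dense in $C([0,1])$ by Stone-Weierstrass and tightness is automatic. So it suffices to show that for every fixed $t\in\dZ_+$, the functional
\begin{eqnarray*}
(G,o) & \mapsto & \int_0^1\lambda^t\,\mu_{(G,o)}(\dd\lambda) \ = \ P_G^t(o,o)
\end{eqnarray*}
is continuous on $(\rGs,\dloc)$. But $P_G^t(o,o)$ depends only on $\cB_{\lfloor t/2\rfloor+1}(G,o)$, as already noted in the list of local observables: it is therefore $t$-local, hence continuous. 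Thus if $(G_n,o_n)\to(G,o)$ in $\dloc$, all moments of $\mu_{(G_n,o_n)}$ converge to the corresponding moments of $\mu_{(G,o)}$, and the continuity follows.

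For the global statement, I would combine this continuity with the spatial-average identity (\ref{localglobal}). Given any $f\in C([0,1])$, define
\begin{eqnarray*}
F(G,o) & := & \int_0^1 f(\lambda)\,\mu_{(G,o)}(\dd\lambda).
\end{eqnarray*}
By the first part $F$ is continuous on $\rGs$, and since $\mu_{(G,o)}$ is a probability measure we have $|F(G,o)|\le\NRMS{f}_\infty$, so $F$ is bounded. Applying (\ref{localglobal}) and then the defining property (\ref{def:lwc}) of local weak convergence,
\begin{eqnarray*}
\int_0^1 f(\lambda)\,\mu_{G_n}(\dd\lambda) \ = \ \frac{1}{|V_n|}\sum_{o\in V_n}F(G_n,o) & \xrightarrow[n\to\infty]{} & \cL[F(G,o)]\ =\ \int_0^1 f(\lambda)\,\mu_\cL(\dd\lambda),
\end{eqnarray*}
which is exactly the weak convergence $\mu_{G_n}\to\mu_\cL$. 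The measure $\mu_\cL(\dd\lambda):=\cL[\mu_{(G,o)}(\dd\lambda)]$ is a bona fide probability measure on $[0,1]$ since $\mu_{(G,o)}$ is measurable in $(G,o)$ (by the continuity already proved) and a probability measure $\cL$-almost surely.

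I do not foresee any serious obstacle here: the only two things to verify are the locality of the return probabilities (which is built into the definition of $P_G^t$) and the fact that moment convergence on the compact interval $[0,1]$ implies weak convergence, both of which are standard. The conceptually crucial ingredient is really the spectral-theorem identity (\ref{def:moments}), which converts a purely spectral object into a local one and thereby makes it amenable to the local-weak-convergence machinery.
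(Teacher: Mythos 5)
Your proof is correct and takes essentially the same route as the paper: identify the moments of $\mu_{(G,o)}$ with the local observables $(G,o)\mapsto P^t_G(o,o)$, and then use moment convergence on the compact interval $[0,1]$. The only small difference is in the second claim, where you test $\mu_{G_n}$ against arbitrary $f\in C([0,1])$ (invoking the continuity of $(G,o)\mapsto\mu_{(G,o)}$ established in the first part), whereas the paper simply applies (\ref{def:lwc}) to the $t$-local observable $(G,o)\mapsto P^t_G(o,o)$ once more and reuses the moment-convergence argument; both routes are equally valid and neither buys anything substantive over the other.
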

\begin{proof}For each fixed $t\ge 0$, the observable $(G,o)\mapsto P^t_G(o,o)$ is clearly $t-$local, hence continuous. In particular, via  the identity (\ref{def:moments}), the convergence $(G_n,o_n)\to (G,o)$ in $\rGs$ implies
\begin{eqnarray}
\label{cv:moments}
\forall t\ge 0,\quad \int_{0}^1 \lambda^t\,\mu_{(G_n,o_n)}(\dd\lambda)  & \xrightarrow[n\to\infty]{} &  \int_{0}^1 \lambda^t\,\mu_{(G,o)}(\dd\lambda).
 \end{eqnarray} 
Since convergence in $\cP([0,1])$ is equivalent to the convergence of moments, we conclude that
 $\mu_{(G_n,o_n)}\xrightarrow[n\to\infty]{}\mu_{(G,o)}$, and the continuity is proved. Similarly, the second claim  is obtained by applying (\ref{def:lwc}) to the $t-$local observable $f\colon (G,o)\mapsto P^t_G(o,o)$, for each $t\ge 1$.
\end{proof}

\begin{corollary}[Unit spectral radius implies poor spectral expansion]\label{co:spectrum}Let $G_n=(V_n,E_n), {n\ge 1}$ be  finite graphs having a local weak limit $\cL$ such that $\cL(\rho(G)=1)=1$. Then, for any $0\le \rho<1$,  
\begin{eqnarray}
\label{accumulation}
\liminf_{n\to\infty}\, \mu_{G_n}\left([\rho,1]\right) & > & 0.
\end{eqnarray}
Moreover, we have the refinement
\begin{eqnarray}
\label{eigenvectors}
\sup_{n\ge 1}\,\frac{\left|\left\{x\in V_n\colon \mu_{(G_n,x)}([\rho,1])\le \varepsilon\right\}\right|}{|V_n|}& \xrightarrow[\varepsilon\to0]{} & 0. 
\end{eqnarray}
\end{corollary}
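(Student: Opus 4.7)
The plan is to reduce both conclusions to a single continuous observable $F\colon\rGs\to[0,1]$ that underestimates the spectral mass on $[\rho,1]$, and then to combine the spectral continuity of Lemma~\ref{lm:continuity} with the Portmanteau theorem.

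I would first identify the spectral radius with the top of the spectral support. By the moment formula~(\ref{def:moments}) and the convergence of $L^t$-norms to the $L^\infty$-norm,
\begin{equation*}
\varrho(G) \;=\; \lim_{t\to\infty}\left(\int_{0}^{1}\lambda^t\,\mu_{(G,o)}(\dd\lambda)\right)^{1/t} \;=\; \sup\supp\mu_{(G,o)}.
\end{equation*}
Hence the hypothesis $\cL(\varrho(G)=1)=1$ forces $\mu_{(G,o)}([\rho',1])>0$ for every $\rho'<1$, $\cL$-almost surely.

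Next, I would fix $\rho'\in(\rho,1)$ and a continuous function $\phi\colon[0,1]\to[0,1]$ satisfying $\phi\equiv 0$ on $[0,\rho]$ and $\phi\equiv 1$ on $[\rho',1]$, so that $\phi(\lambda)\le\mathbf{1}_{[\rho,1]}(\lambda)$ pointwise, and set
\begin{equation*}
F(G,o) \;:=\; \int_{0}^{1}\phi(\lambda)\,\mu_{(G,o)}(\dd\lambda).
\end{equation*}
Lemma~\ref{lm:continuity} and the continuity of $\phi$ make $F$ a bounded continuous observable on $\rGs$, and the previous step yields $F(G,o)\ge\mu_{(G,o)}([\rho',1])>0$ $\cL$-almost surely.

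For the first assertion~(\ref{accumulation}), averaging over the root via the identity~(\ref{localglobal}) gives $\int_{0}^{1}\phi\,\dd\mu_{G_n}=\cL_{G_n}[F]$, and combining this with $\phi\le\mathbf{1}_{[\rho,1]}$ and the weak convergence $\cL_{G_n}\to\cL$ yields
\begin{equation*}
\mu_{G_n}([\rho,1]) \;\ge\; \cL_{G_n}[F] \;\xrightarrow[n\to\infty]{}\; \cL[F] \;>\; 0.
\end{equation*}
For the refinement~(\ref{eigenvectors}), the inclusion $\{\mu_{(G_n,x)}([\rho,1])\le\varepsilon\}\subseteq\{F(G_n,x)\le\varepsilon\}$ gives
\begin{equation*}
\frac{\left|\{x\in V_n\colon\mu_{(G_n,x)}([\rho,1])\le\varepsilon\}\right|}{|V_n|} \;\le\; \cL_{G_n}\!\left(F\le\varepsilon\right).
\end{equation*}
Since $\{F\le\varepsilon\}$ is closed in $\rGs$, the Portmanteau theorem furnishes $\limsup_{n\to\infty}\cL_{G_n}(F\le\varepsilon)\le\cL(F\le\varepsilon)$, which tends to $\cL(F=0)=0$ as $\varepsilon\to 0$. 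For each individual finite graph $G_n$, on the other hand, the Perron eigenvector of $P_{G_n}$ is nowhere vanishing, so $\mu_{(G_n,x)}(\{1\})>0$ for every $x\in V_n$ and therefore $F(G_n,x)\ge\phi(1)\,\mu_{(G_n,x)}(\{1\})>0$ pointwise; since $V_n$ is finite this forces $\cL_{G_n}(F\le\varepsilon)\to 0$ as $\varepsilon\to 0$ for each fixed $n$. A routine diagonal argument then converts the $\limsup$ statement and the $n$-by-$n$ statement into the uniform bound $\sup_{n\ge 1}\cL_{G_n}(F\le\varepsilon)\to 0$.

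The only delicate point in this plan is the last uniformity step: the Portmanteau bound controls only $\limsup_n$, not $\sup_n$, so one has to peel off the initial finite segment of the sequence by hand, using the non-vanishing of the top eigenvector to ensure $F$ is strictly positive on each finite $G_n$. Everything else is a direct application of the spectral continuity principle and the standard dictionary between weak convergence and Portmanteau-type inequalities.
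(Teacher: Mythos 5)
Your proof is correct and follows essentially the same strategy as the paper: spectral continuity (Lemma~\ref{lm:continuity}), the Portmanteau theorem, the identification $\varrho(G)=\sup\supp\mu_{(G,o)}$ from the moment formula~(\ref{def:moments}), and the upgrade of a $\limsup$ bound to a $\sup$ bound using the fact that the Perron eigenvector on each finite $G_n$ is nowhere vanishing. The one genuine difference is your introduction of the continuous sandwiching function $\phi$, and it is actually a worthwhile refinement: the paper invokes Portmanteau for the set $F_\varepsilon=\{\mu_{(G,o)}([\rho,1])\le\varepsilon\}$ by asserting that it is closed ``by continuity of $(G,o)\mapsto\mu_{(G,o)}$,'' but since $[\rho,1]$ is closed, $\mu\mapsto\mu([\rho,1])$ is only upper semi-continuous under weak convergence, so $F_\varepsilon$ need not be closed (it is contained in the genuinely closed set $\{\mu_{(G,o)}((\rho,1])\le\varepsilon\}$, which is what the argument should use). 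Your $F=\int\phi\,\dd\mu_{(G,o)}$ with continuous $\phi$ squeezed between $\mathbf{1}_{(\rho',1]}$ and $\mathbf{1}_{[\rho,1]}$ produces a continuous observable whose sublevel sets are honestly closed, so the Portmanteau step goes through without any semi-continuity caveat; the price is one extra parameter $\rho'\in(\rho,1)$, but the hypothesis $\cL(\varrho(G)=1)=1$ makes this harmless since $\cL(\varrho(G)\le\rho')=0$ for every $\rho'<1$. The remainder — the averaging identity~(\ref{localglobal}) for~(\ref{accumulation}) and the diagonal argument for~(\ref{eigenvectors}) — is the same as in the paper.
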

\begin{proof}Fix $0\le \rho<1$. By the second part of Lemma \ref{lm:continuity} and the Portmanteau Theorem, we have
\begin{eqnarray}
\label{radiusliminf}
\liminf_{n\to\infty}\mu_{G_n}([\rho,1]) & \ge & \cL\left[\mu_{(G,o)}((\rho,1])\right].
\end{eqnarray}
On the other hand, comparing  (\ref{def:moments}) with the definition of the spectral radius, we see that $\rho(G)$ is exactly  the supremum of the support of $\mu_{(G,o)}$, for any   $(G,o)\in\rGs$. In other words,  
\begin{eqnarray*}
 \mu_{(G,o)}((\rho,1])>0 & \Longleftrightarrow & \rho(G)>\rho.
\end{eqnarray*}
In particular, since $\cL(\rho(G)=1)=1$, the right-hand side of (\ref{radiusliminf}) is positive, as desired. To prove the second claim, note that the continuity of $(G,o)\mapsto\mu_{(G,o)}$ implies that the event $F_\varepsilon=\left\{\mu_{(G,o)}([\rho,1])\le \varepsilon\right\}$ is closed in $\rGs$. Consequently, the convergence $G_n\to\cL$ implies
\begin{eqnarray*}
\limsup_{n\to\infty}\cL_{G_n}(F_\varepsilon) & \le & \cL(F_\varepsilon),
\end{eqnarray*}
and the right-hand side tends  to $\cL(F_0)\le \cL(\rho(G)\le\rho)=0$ as $\varepsilon\to 0$.  
The limsup can then be replaced with a sup, since   for each $n\ge 1$, $\cL_{G_n}(F_\varepsilon)$   decreases monotonically to $0$ with $\varepsilon$. 
\end{proof}
\begin{remark}[Corollary \ref{co:spectrum} vs Theorem \ref{th:spectrum}]\label{rk:final}
The statement (\ref{accumulation})  asserts that a macroscopic proportion of eigenvalues of $G_n$  accumulate in $[\rho,1]$, which is exactly the conclusion of Theorem \ref{th:spectrum}. The refinement (\ref{eigenvectors}), on the other hand, constitutes a rigorous formalization of the ``delocalization'' announced in Remark \ref{rk:eigenvector}. To see this, recall that for any graph $G$ with $N$ vertices, we have by (\ref{spectralmeasure}),
\begin{eqnarray*}
\mu_{(G,x)}([\rho,1]) & = & \sum_{i=1}^{N}\deg_G(x)|\phi_i(x)|^2{\bf 1}_{\lambda_i(G)\ge \rho}.
\end{eqnarray*}
In words, the number $\mu_{(G,x)}([\rho,1])\in[0,1]$  measures the cumulative squared amplitude at   $x$ of all the basis eigenvectors corresponding to ``bad'' eigenvalues (those in $[\rho,1]$). In particular, the set $\{x\in V_G\colon \mu_{(G,x)}([\rho,1])\le \varepsilon\}$ represents the region where these ``bad'' eigenvectors have a small cumulative squared amplitude. The statement (\ref{eigenvectors}) asserts that the relative size of this region can be made arbitrarily small by choosing $\varepsilon$ small, uniformly in $n$. Thus, bad eigenvectors have their cumulative mass ``spread out'' across most vertices. 
\end{remark}

\bibliographystyle{plain}
\bibliography{draft}
\end{document}